\font\got=eufm10
\def\g2{ \hbox{\got g}_2}
\def\e7{ \hbox{\got e}_7}
\def\e8{ \hbox{\got e}_8}
\def\id{\mathop{\hbox{\rm id}}}
\def\h{\hbox{\got h}}
\def\L{{\mathcal L}}
\def\J{{\mathcal J}}
\def\a{\alpha}
\def\s{\sigma}
\def\tr{\mathop{\rm tr}}
\def\T{\mathcal{T}}
\def\A{\mathcal A}
\def\CC{\mathcal C}
\def\X{\mathfrak{X}}\def\supp{\mathop{\hbox{\rm  Supp}}}
\def\M{\mathcal M}
\def\span#1{\langle#1\rangle}
\def\esc#1{\langle#1\rangle}
\def\H{{\mathbb  H}}
\def\OO{\mathbb{O}}
\def\C{\mathbb C}
\def\Z{\mathbb Z}
\def\F{\mathbb F}
\def\R{\mathbb R}
\newcommand{\ii}{\textbf{i}}
\def\aut{\mathop{\rm Aut}}
\def\der{\mathop{\rm Der}}
\def\Ad{\mathop{\rm Ad}}
\def\ad{\mathop{\rm ad}}
\def\sll{\mathop{\rm sl}}
\def\fix{\mathop{\rm fix}}
\def\dim{\mathop{\rm dim}}
\def\sp31{\mathop{ \mathfrak{sp}}_{3,1}(\mathbb{H})}
\def\sll3{\mathop{ \mathfrak{sl}}(3,\mathbb{O})}
\def\Sp{\mathop{\rm Sp}}
\def\Mat{\mathop{\rm Mat}}
\def\a{\alpha}
\def\si{\sigma}
\def\sign{\mathop{\hbox{\rm sign}}}
\def\I{\mathbf{i}}
\def\rank{\mathop{\rm rank}}
\def\tr{\mathop{\rm tr}}
\def\diag{\mathop{\rm diag}}
 \def\e6{  \mathfrak{e}_6 }
 \def\f4{  \mathfrak{f}_4 }
 \def\c4{  \mathfrak{c}_4 }
 \def\sig#1{ \mathfrak{e}_{6,-#1} }
\newtheorem{defi}{Definition}
\newtheorem{te}{Theorem}
\newtheorem{pr}{Proposition}
\newtheorem{lm}{Lemma}
\newtheorem{co}{Corollary}
\newtheorem{re}{Remark}
\title[Fine gradings on $\sig{26}$  ]{ Symmetries on the real form $\sig{26}$}  
\author[C. Draper]{Cristina Draper${}^\star$ }
\thanks{${}^\star$   Partially supported by MCYT grant   MTM2013-41208-P  and by   the Junta de Andaluc\'{\i}a PAI
projects  FQM-336,  FQM-7156.}
\address{Cristina Draper Fontanals: Departamento de
Matem\'atica Aplicada\\ Escuela de las Ingenier\'{\i}as\\ Ampliaci\'{o}n Campus de Teatinos, S/N, 29071 M\'alaga,
Spain\\cdf@uma.es}
\author[V. Guido]{Valerio Guido${}^\dagger$ }
\thanks{${}^\dagger$   Assegno di Ricerca in Algebra presso l'Universit\`{a} del Salento - 
Dipartimento di Matematica, approvato con Decreto n.197 del 03/12/2014}
\address{Valerio Guido: Dpto. di Matematica E. De Giorgi\\ Universit\`{a} del Salento\\
Via Provinciale Lecce-Arnesano, 73100, Lecce, Italy\\valerio.guido@unisalento.it}
\subjclass[2010]{17B25, 17B70.
}
\keywords{Lie algebra of type $\e6$, real form, signature $-26$, group grading, fine grading, exceptional real Lie algebra.}
\begin{document}

\setlength{\unitlength}{0.06in}

\maketitle


\begin{abstract}
We describe   four fine gradings on the real form $\sig{26}$. They are precisely the gradings whose complexifications are fine gradings on the complexified algebra $\mathfrak{e}_6$. The universal grading groups are $\Z_2^6$, $\Z\times\Z_2^4$, $\Z^2\times\Z_2^3$, and  $\Z_4\times\Z_2^4$.
\end{abstract}




\section{Introduction}

Gradings of Lie algebras  have appeared in mathematical physics \cite{fisicamuyref} and particularly in particle physics \cite{fisicos}.
One of the keys of this interplay is that each fine grading on a Lie algebra provides a maximal set  of additive quantum numbers (see, for instance, \cite{numeroscuanticos}). There also seems that the exceptional Lie algebra $\mathfrak{e}_6$   could play some role in particle physics:
according to  \cite[Chapter~27]{libroGeorgi}, the  search for unified theories in particle physics leads to a theory based on the algebra $\mathfrak{e}_6$: it is a natural step as a candidate gauge group for a Grand Unified Theory, in the progression $SU(5)$, $SO(10)$. Also recall that the group $E_6$ has a long history of applications in physics from the apparition of the   Jordan algebras, more precisely the Albert algebra, as a search of a convenient formalism of the quantum Mechanics \cite{jordanalgebras}. 

Gradings are necessary too for the new theory of graded contractions of Lie algebras. Originally the notion of contraction implicitly appeared in physics as a change of symmetry, usually connected with certain asymptotic limits of physical theories (e.g. see \cite{IW53} where the Galilei group appears as the limit of the Lorentz group as the speed of light tends to infinity). In the modification of Wigner's approach posed by Patera and his collaborators \cite{contraccionesgraduadas}, the contractions preserve a fixed grading by an abelian group.  

 {For any finite dimensional simple Lie algebra   over $\C$, the root decomposition is a fine (group) grading which provides   basic information about the structure of the algebra whose importance
for applications is difficult to overestimate. This is a first reason to study \emph{fine} gradings (those ones which cannot be splitted in  smaller pieces), the possible alternative approaches to the structure theory and
representation theory.  This study  was initiated by \cite{LGI}, from the belief of unexplored applications. First, every grading is  obtained from one particular fine grading. Second, the study of fine gradings provides alternatives to the
Chevalley basis of semisimple Lie algebras: some problems could be   naturally and more simply
formulated exploiting bases dictated by other fine gradings   than the root
decomposition, as happens for instance  with the second fine grading of $\mathfrak{sl}(2, \C)$,
which is a $\Z_2\times\Z_2$-grading  spanned by the Pauli's matrices. Also the generalization  of
the Pauli's matrices to  $\mathfrak{gl}(n, \C)$ is finding its way into the physics literature \cite{aplicacioneslasPauli}.
 Third, the study of deformations of Lie algebras during
which a chosen grading is preserved seems to be very
useful in applications \cite{contraccionesgraduadas}.  Fourth the problem of finding isomorphism classes of solvable Lie algebras
of a given dimension is reduced in \cite{PatZas}  to the classification of isomorphism classes
of equidimensional nilpotent Lie algebras with the help of fine gradings. More directions and applications of fine gradings can be found in the recent monograph about gradings on simple Lie algebras \cite{libro}. }

There is not much work developed about fine gradings on  real forms of simple Lie algebras. While the classification of the fine gradings on the complex finite-dimensional simple Lie algebras is almost complete \cite{libro}, the same problem for real algebras has only be treated for $\mathfrak{g}_2$ and $\mathfrak{f}_4$ in \cite{reales} and for some classical Lie algebras of low-dimension (for instance, in \cite{LGIII}). 

In this paper we begin the study of the gradings on the real forms of  $\e6$, the \emph{next} exceptional Lie algebra (i.e., the left one with the least possible dimension).
This complex Lie algebra  has five real forms, characterized by the  signatures of their Killing forms, namely, $6$, $2$, $-14$, $-26$ and $-78$.
We focus on that one of signature -26 because of its multiple apparitions related to some other objects in Physics and Mathematics. First, $E_{6,-26}$ is described as the group of symmetries of the Albert algebra, in the sense that these symmetries preserve the determinant (not the product) of this 27-dimensional exceptional Jordan algebra.
Besides, $E_{6,-26}$ is just the group of collineations (line-preserving transformations) of the projective plane $\OO\mathbb{P}^2$ (see \cite{Baez} for more information about this real form), sometimes described as the symmetry group of the bioctonionic projective plane \cite{Rosenfeld}. A description of the group $E_{6,-26}$ as $SL(3,\OO)$ is given in \cite{octonionesye6}, generalizing the interpretation of $SL(2,\OO)$ as the double cover of the generalized Lorentz group $SO(9,1)$. 
These facts have provoked quite recent attention on our real form: for instance, \cite{Wangberg} puts its attention on remarkable subalgebras, and Manogue and Drey apply $\sig{26}$ to   Particle Physics \cite{octonionesye6yfisparticulas},
  describing  many properties of leptons in a natural way only by choosing     one of the octonionic directions and  one of the $2\times 2$ submatrices inside the $3\times 3$ matrices (the Albert algebra) to be special.  

In order to emphasize the general relevance of the exceptional group $E_6$ in Mathematics, we will mention   a couple of additional examples. The groups of type $E_6$ have appeared recently   in Differential Geometry, in order to answer what is known as \emph{irreducible holonomy problem:}   which groups can occur as holonomies of torsion-free affine connections \cite{masholonomia}. The complex group $E_6$ as well as the real groups $E_{6,6}$ and $E_{6,-26}$ appear  in the list of \emph{exotic holonomies}, which are defined to be those ones missing during time. That work was continued by \cite{holonomiae6}, which proved that   if a torsion-free affine connection has holonomy contained in $E_6$ (in any signature) and is also Ricci-flat, then it is flat.     
Also algebraic geometers are interested in  the group $E_6$: for instance, the automorphism group of the configuration of the 27 lines on a smooth cubic surface in
$\mathbb{CP}^3$
 can be identified with the Weyl group of $\e6$, and such 27 lines on the cubic surface are in natural correspondence with the weights of the minimal
representation of $E_6$ \cite{26lineas}.  \smallskip

The structure of the paper is as follows. First, some preliminaries about real forms are exposed, stressing   some methods to determine lately the signature relating it with order two automorphisms of the complexified algebra. Section~3 is devoted to develop models of $\sig{26}$  which afterwards will be adapted to the gradings. This makes necessary     to recall the famous Freudenthal's magic square related to the Tits' construction and some basic facts about composition and Jordan algebras. The background on gradings is compiled in Section~4, which also contains a very brief   sketch of the classification results on gradings on the complex Lie algebra $\e6$ as well as some of the main  methods used in the study of   gradings on  real forms. Our main results are presented in Section~5. Namely, fine gradings on $\sig{26}$ over the groups $\Z_2^6$, $\Z\times\Z_2^4$, $\Z^2\times\Z_2^3$  and  $\Z_4\times\Z_2^4$ are described,   followed by a proof that these gradings exhaust all the possible cases of fine gradings inherited from $\e6$. We finish with a list of   conclusions and open problems.

\smallskip


\section{Preliminaries on real forms  }\label{sec_prelimirealforms}

The material about  real forms is extensively developed in the book  \cite{libroreales}, although here we follow the approach in \cite{apuntesAlb}.

If $L$ is a real Lie algebra,  $L$ is said a \emph{real form} of a complex Lie algebra $S$ if the complexification $L^\C\cong S$,
where the bracket in    $L^\C=L \otimes_{\R}\C\cong L\oplus \ii L $ is defined by
$$
[x_1+\ii y_1, x_2+\ii y_2] = [x_1,x_2]-[y_1,y_2]+ \ii ([x_1,y_2] + [y_1,x_2])
$$
for all $x_1,x_2,y_1,y_2 \in L$.   
If $S=L^\C$, the map  $\sigma(x+\ii y)=x-\ii y$ is a \emph{conjugation} (conjugate-linear order two map) such that $L= S^\sigma:=\{x\in S\mid \sigma(x)=x\}$.
 Two different real forms $ S^{\sigma_1}$ and $ S^{\sigma_2}$ are isomorphic if and only if  $\sigma_1$ and $\sigma_2$ are conjugate, that is, there is $\alpha\in\aut(S)$ such that $\sigma_2=\a\sigma_1\a^{-1}$.

The real forms of a complex semisimple Lie algebra $S$ are characterized by the signatures of their   Killing forms. Recall that, for $L$ a real Lie algebra, 
 the \emph{Killing form} of $ L$ is the symmetric bilinear form $k\colon L\times  L \rightarrow \R$ defined by
$
k(x,y)=\tr (\ad x \ad y) , 
$
where $\ad x (y)=[x,y]$  for any $x,y\in  L$. 
If $L$ is semisimple, by Cartan's criterion the Killing form is nondegenerate and can be diagonalized in a suitable basis with the diagonal entries $\pm1$. The signature of $k$ is defined as $n_+-n_-$, where $n_{\pm}$ is the number of $\pm1$.
The real form $L$ is called \emph{compact} if the Killing form is (negative) definite, so it is characterized by having signature equal to $-\dim S$. The conjugation $\tau$ such that $L=S^\tau$ is also called compact. 
The real form $L$ is called \emph{split} if it contains a Cartan subalgebra $\mathfrak{h}$ such that $\ad(h)$ is diagonalizable (over $\R$) for any $h\in \mathfrak{h}$. In this case the signature of $L$ coincides with the rank of $S$. Any complex semisimple Lie algebra $S$ possesses both a compact   and a split real form (by the above,  unique up to isomorphisms).  We recall for later use how to construct these forms (extracted from \cite[Appendix A]{apuntesAlb}).   For any $\a_j$ in a basis $ \{\a_1,\dots,\a_n\}$ of the root system $\Delta$ of $S$ relative to a Cartan subalgebra ($n=\rank S$), take $e_j\in S_{\a_j}$ and $f_j\in S_{-\a_j}$ such that $\a_j(h_j)=2$ for $h_j=[e_j,f_j]$.
 For any $\alpha\in\Delta^+$ choose $1\le j_1,\dots,j_m\le n$ such that $\alpha_{j_{1}}+\dots+\alpha_{j_{m}}=\alpha$ and $\alpha_{j_{1}}+\dots+\alpha_{j_{i}}\in\Delta$ for all $i\le m$. 
Denote   
  $e_\alpha:= [e_{j_{m}},[e_{j_{m-1}},\dots [e_{j_{2}},e_{j_{1}}]\dots]]$
  and $f_\alpha:= [f_{j_{m}},[f_{j_{m-1}},\dots [f_{j_{2}},f_{j_{1}}]\dots]]$. Then the conjugation $\sigma_0$ of $S$ fixing the basis 
 $\mathcal{B}= \{h_j\mid j=1,\dots,n\}\cup\{e_\a,f_\a\mid \a\in\Delta^+\}$
 is split, since the  basis $\mathcal{B}$ has  rational numbers as structure constants.
Moreover, if we consider  the (only) automorphism  $\omega\in\aut(S)$ determined by $\omega(e_j)=-f_j$ and $\omega(f_j)=-e_j$ for any $j=1,\dots,n$, then $\sigma_0\omega$ is compact.

 It will be useful for our purposes that the classification of the real forms of $S$ is equivalent to the classification of the order two automorphisms of $S$.
 More precisely, 
 for any $\si$ conjugation of $S$ there exists $\theta_\si\in\aut(S)$ (not unique)  commuting with $\si$ such that the conjugation $\theta_\si\si$ is compact, and the map 
 \begin{equation}\label{eq_defdeFI}
 \begin{array}{rl}\vspace{2pt}\Phi\colon&\{\textrm{Conjugacy classes of conjugations of $S$}\} \\
 &\quad\longrightarrow\{\textrm{Conjugacy classes of order 2 automorphisms of $S$}\} 
 \end{array}
 \end{equation}
given by $\Phi([\si])=[\theta_\si]$, is well defined and bijective. (Here order two means $\theta_\si^2=1$, that is, the identity is included.)
Furthermore, the signature of  the Killing form of $L=S^\si$ coincides with 
\begin{equation}\label{eq_signapartirparte fija}
\dim S-2\dim\fix(\theta_\si).
\end{equation}
 Indeed, the automorphism $\theta_\si$   produces a $\Z_2$-grading on $L=S^\si=L_{\bar0}\oplus 
 L_{\bar1}$ such that  $ L_{\bar0}\oplus \ii
 L_{\bar1}$ is a compact real form. 
 (The restriction of $\theta_\si$ to $L$ is usually called a \emph{Cartan involution} of $L$ and the decomposition $L=L_{\bar0}\oplus 
 L_{\bar1}$ a \emph{Cartan decomposition} of $L$.)
 Since $L_{\bar0}$ and $L_{\bar1}$ are orthogonal relative to $k$ the Killing form of $S$, then $\sign k_L=\sign k\vert_{L_{\bar0}}+\sign k\vert_{L_{\bar1}}$.
 (We are denoting by $k_L$ the Killing form of $L$, which coincides with $k\vert_L$, and by $k\vert_V$ the restriction $k\vert_{V\times V}$ for $V$ a subspace of $L$.)
 But $\sign k\vert_{L_{\bar1}}=-\sign k\vert_{\ii L_{\bar1}}$
 and $k\vert_{L_{\bar0}\oplus \ii
 L_{\bar1}}$ is negative definite, so that $\sign k_L=-\dim_\C S_{\bar0}+\dim_\C S_{\bar1}=\dim_\C S-2\dim_\C S_{\bar0}$.

 \medskip

The classification of the order two automorphisms of a complex Lie algebra was completed by Kac \cite{Kac} (Cartan in the inner case \cite{CartanAutomorfismos}).
Such automorphisms are characterized by the isomorphy class of their fixed subalgebra, and in    case $S$ is  of type $E_6$, by the dimension of such fixed subalgebra. This correspondence for the case $E_6$ is detailed in Table \ref{tabladeautomorfysignaturas}.

\begin{table}[h]
\begin{tabular}{|c||ccccc|}
\hline  
\vrule width 0pt height 12pt
    $\sign  k\vert_{S^{\sigma}}$&$-78 $&$2 $&$-14 $&$-26 $&$6 $\\
   \vrule width 0pt height 12pt
   $\dim\fix(\theta_{\sigma})$&$78 $&$ 38$&$ 46$&$ 52$&$36 $\\
 \vrule width 0pt height 12pt
 $ \fix(\theta_{\sigma})$& $\quad E_6\quad  $ & $\quad A_5+A_1\quad $ & $\quad D_5+Z \quad $&$\quad F_4\quad  $&$\quad C_4\quad  $ \\
 \hline  
 \end{tabular}\vspace{3pt}
\caption{Automorphisms versus signatures}\label{tabladeautomorfysignaturas}
 \end{table}

\section{Models of $\sig{26}$}

\subsection{Composition algebras and Jordan algebras}

The contents of this subsection and the next one are extracted from \cite{Schafer}.
The ground field $\F$ will be always assumed to be either $\R$ or $\C$.

A \emph{Hurwitz algebra} over $\F$   is a  unital algebra $\CC$ endowed with a nonsingular quadratic form $n\colon \CC\to \F$ admitting composition, that is, $n(xy)=n(x)n(y)$. This form $n$ is usually called the \emph{norm}.
Each element $a\in \CC$ satisfies a quadratic equation
$
a^2-t_\CC(a)a+n(a)1=0,
$
where $t_\CC(a)=n(a+1)-n(a)-1$ is called the \emph{trace}. Denote by $\CC_0=\{a\in \CC\mid t_\CC(a)=0\}$ the subspace of traceless elements. Note that $[a,b]=ab-ba\in \CC_0$ for any $a,b\in \CC$, since $t_\CC(ab)=t_\CC(ba)$.
The map $-\colon \CC\to \CC$ given by $\bar a=t_\CC(a)1-a$ is an involution (order 2 antihomomorphism) and $n(a)=a\bar a$ holds. We will need 
 the fact that for any $a,b\in \CC$, the endomorphism
$
d_{a,b}:=[l_a,l_b]+[l_a,r_b]+[r_a,r_b]
$
is a derivation of $\CC$, where $l_a(b)=ab$ and $r_a(b)=ba$ denote the left and right multiplication operators respectively.

There are Hurwitz algebras only in dimensions $1$, $2$, $4$ and $8$. Over the complex numbers there exists just one Hurwitz algebra of each dimension, while there are 7 real Hurwith algebras: $\R$, $\R\times \R$, $\C$, $\H$, $\Mat_{2\times2}(\R)$, $\OO$ and the split octonions $\OO_s$ (Zorn algebra). The most useful choices  for our purposes are:
\begin{itemize}
\item  $\R\times \R$, with  componentwise product and norm given by $n((a,b))=ab$;
\item  The   octonion algebra $\OO$,  which is the real division algebra with basis  
$$
\{1,\textbf{i},\textbf{j},\textbf{k},\textbf{l},\textbf{il},\textbf{jl},\textbf{kl}\},
$$
for the product in  $\langle\{1,\textbf{i},\textbf{j},\textbf{k}\}\rangle=\H$ that one in the quaternion algebra, and 
$$q_1(q_2\textbf{l})=(q_2q_1)\textbf{l},\quad (q_1\textbf{l})(q_2\textbf{l})=-\bar q_2q_1, \quad (q_2\textbf{l})q_1=(q_2\bar q_1)\textbf{l}
$$
 for any $q_i\in\H$.
The norm is determined by $n(\H,\textbf{l})=0$, $n(\textbf{l})=1$ and $n\vert_\H$, which coincides with the usual norm of the quaternion algebra.
\end{itemize}\smallskip

A \emph{Jordan algebra} over $\F$ ($\R$ or $\C$) is a commutative  algebra satisfying the Jordan identity
$
(x^2y)x=x^2(yx)
$.
If $A$ is an associative algebra and the multiplication is  denoted by juxtaposition, then $A^+=(A,\cdot)$ is a Jordan algebra, where  the new product $\cdot$ on $A$ is given by
$
x\cdot y=\frac12(xy+yx).
$
If $\CC$ is a  Hurwitz algebra (with involution denoted by $-$), the algebra of $\gamma$-hermitian matrices of order 3 with respect  to the involution given by $x^*=\gamma\bar x\gamma^{-1}$ ($\gamma=\diag\{\gamma_1,\gamma_2,\gamma_3\}$),
  $$ J=\mathcal{H}_3(\CC,\gamma)=\{x \in \textrm{Mat}_{3\times 3}(\CC)\mid x^*=  x \},$$  
  is a Jordan algebra  for the product $\cdot$ given as above.
Consider the normalized trace $t_J\colon J\to \F$ given by $t_J(x)=\frac{\tr(x)}{3}=\frac{\sum_{i=1}^3 x_{ii}}{3}$ if $x=(x_{ij})\in J$. This   is the only linear map satisfying that $t_J(I)=1$ ($I$ the identity matrix of order 3) and $t_J((x\cdot y)\cdot z)=t_J(x\cdot(y\cdot z))$ for any $x,y,z\in J$. Thus we have a decomposition $J=\F I\oplus J_0$, for $J_0=\{x\in J\mid t_J(x)=0\}$, since $x*y:=x\cdot y-t_J(x\cdot y)I\in J_0$. In particular we have a commutative multiplication $*$ defined in $J_0$.
Denote by $R_x\colon J\to J$, $y\mapsto y\cdot x$  the multiplication operator, and observe that
   $[R_x,R_y]\in\der(J)$
  for any $x,y\in J$.

\subsection{  Tits' construction}\label{subsec_TitsModel}

In 1966, Tits provided a beautiful unified construction of all the exceptional simple Lie algebras
\cite{Tits}.  
 When in this construction we use  a composition algebra $\CC$ and a
 Jordan algebra consisting  of $3\times 3$-hermitian matrices over a second composition algebra $\CC'$,
Freudenthal's magic square   is obtained:\smallskip

\begin{center}
 {\small
 \begin{tabular}{c||cccc|}
 $\dim\CC/\dim\CC'$ & $1$& $2$& $4$& $8$\\
\hline\hline \vrule width 0pt height 14pt
 $1$& $A_1$&$A_2$&$C_3$&${ F_4}$\\
 $ 2$& $A_2$&$A_2\oplus A_2$&$A_5$&${ E_6}$\\
 $ 4$& $C_3$&$A_5$&$D_6$&${ E_7}$\\
 $8$& ${ F_4}$&${ E_6}$&${ E_7}$&${ E_8}$\\\hline
 \end{tabular}}
 \end{center}  \medskip

\noindent This construction is reviewed here. For $\CC,\CC'$ two Hurwitz algebras and $J=\mathcal{H}_3(\CC',\gamma)$, consider the vector space
\begin{equation}\label{eq_TitsModel}
\T(\CC,J)=\der\,(\CC)\oplus (\CC_0 \otimes J_0) \oplus \der\,(J),
\end{equation}
which is made into a Lie algebra over $\F$ by defining the multiplication $[\ ,\ ]$ on $\T(\CC,J)$ 
(bilinear and anticommutative)
which agrees with the ordinary commutator in $\der\,(\CC)$ and $\der(J)$ and it satisfies
\begin{equation}\label{eq_TitsProduct}
\begin{array}{l}
\bullet\  {[}\der(\CC),   \der(J)]=0, \\
\bullet\  {[}d, a\otimes x]=d(a) \otimes x, \\
\bullet\  [D, a\otimes x]=a \otimes D(x), \\
\bullet\  {[}a\otimes x, b\otimes y]= t_{J}(xy) d_{a,b}+[a,b]\otimes (x\ast y)+ 2t_\CC(ab)[R_x,R_y],
\end{array}
\end{equation}
for all $d\in \der(\CC)$, $D\in \der(J)$, $a,b\in \CC_0$ and $x,y\in J_0$. 

 In particular, note that $\T(\CC,\mathcal{H}_3(\CC',\gamma))$ and $\T(\CC',\mathcal{H}_3(\CC,\gamma))$  are both Lie algebras of type $E_6$
 if $\CC$ and $\CC'$ are Hurwitz algebras of dimensions $2$ and $8$, respectively.

\subsection{ $\sig{26}$ from Tits' construction}\label{subsec_construyendo la nuestra}

Several constructions of the real form $\sig{26}$ have appeared in the literature. 
According to the Vinberg's construction (\cite{Vinberg}, see also \cite[p.~178]{enci}), based also in two composition algebras $\CC$ and $\CC'$, the algebra 
$\der(\CC)\oplus\der(\CC')\oplus\mathfrak{sa}_3(\CC\otimes\CC')$ is isomorphic to $\sig{26}$ when $\CC=\OO$ and $\CC'=\R\oplus\R$. (Here $\mathfrak{sa}_3(\CC\otimes\CC')$ denotes the space of skew-hermitian matrices of order 3 with zero trace and entries in $\CC\otimes\CC'$.) Vinberg's approach to get Freudenthal's magic square has been used by \cite{otrocuadradoreales} and by some recent papers in the search of a unified description of the exceptional groups (see also \cite{Baez}). Unfortunately, this description of $\sig{26}$ does not suit very well with   our description of its gradings.  
Another approach due to Elduque   \cite{algoreales} gives  a construction based in symmetric composition algebras as $\sig{26}\cong\mathfrak{g}(p\OO,p(\R\oplus \R))$ and also by replacing the paraoctonion algebra $p\OO$ with the Okubo algebra \cite{libroOkubo}.   (This viewpoint has been useful for describing gradings over algebraically closed fields, but not over $\R$.) We devote this paragraph to show models   adapted to our   description of gradings in   Section~\ref{sec_gradings}.\smallskip

First, note that $\T( \R\oplus \R,J)$ is naturally isomorphic to $ \der(J)\oplus J_0$, which is $\Z_2$-graded with even and odd part $\der(J)$ and $J_0$ respectively, where the product is given by the natural action of $\der(J)$ on $J_0$ and $[x,y]:=[R_x,R_y]\in\der(J)$ for any $x,y\in J_0$. 

\begin{pr}\label{pr_constjacobson} (\cite{Jacobsondeexcepcionales})
$\T(\R\oplus \R,\mathcal{H}_3(\OO,\gamma))\cong\sig{26}$  for $\gamma=\diag\{1,-1,1\}$ and $\gamma=I$.
\end{pr}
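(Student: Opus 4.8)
The plan is to identify the signature of $\T(\R\oplus\R,\mathcal{H}_3(\OO,\gamma))$ as a real form of $\e6$ and show it equals $-26$, for each of the two choices of $\gamma$. By the discussion preceding Table~\ref{tabladeautomorfysignaturas}, the five real forms of $\e6$ are distinguished by their Killing signatures, so it suffices to compute the signature of the Killing form on the real Lie algebra $L:=\T(\R\oplus\R,\mathcal{H}_3(\OO,\gamma))$. First I would invoke the identification noted just before the statement: $\T(\R\oplus\R,J)\cong\der(J)\oplus J_0$ as a $\Z_2$-graded algebra, with $\der(J)=\der(\mathcal{H}_3(\OO,\gamma))$ of type $\f4$ in the even part and $J_0$ (dimension $26$) in the odd part. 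Since $\CC'=\OO$ has dimension $8$, the complexification is of type $E_6$ by the magic-square entry, so $L$ is indeed a real form of $\e6$.

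The key computational step is to recognize this $\Z_2$-decomposition as a \emph{Cartan-type} decomposition and read off the signature from formula~\eqref{eq_signapartirparte fija}. I would argue that $\der(\mathcal{H}_3(\OO,\gamma))$ is a \emph{compact} $\f4$ precisely when $\gamma$ is definite (the case $\gamma=I$) and is a noncompact $\f4$ otherwise; in either case $\der(J)$ has dimension $52$, matching the entry $\dim\fix(\theta_\sigma)=52$ in Table~\ref{tabladeautomorfysignaturas}. The cleanest route is to exhibit the order-two automorphism $\theta$ of $L^\C\cong\e6$ acting as $+1$ on $\der(J)^\C$ and $-1$ on $J_0^\C$; its fixed subalgebra is $\der(J)^\C\cong\f4$ of dimension $52$, so by~\eqref{eq_signapartirparte fija} the associated real form has signature $\dim_\C\e6-2\cdot52=78-104=-26$. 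What remains is to verify that the conjugation $\sigma$ defining our specific real form $L$ corresponds under the bijection $\Phi$ of~\eqref{eq_defdeFI} to this $\theta$, i.e.\ that $L_{\bar0}\oplus\ii L_{\bar1}=\der(J)\oplus\ii J_0$ is a compact real form.

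For that verification I would compute the Killing form directly on the graded pieces. Using the explicit Tits product~\eqref{eq_TitsProduct} specialized to $\CC=\R\oplus\R$ (so $\der(\CC)=0$ and $\CC_0$ is one-dimensional), the bracket on $J_0$ is $[x,y]=[R_x,R_y]\in\der(J)$ and the action of $\der(J)$ on $J_0$ is the natural one. I would show that $k\vert_{\der(J)}$ is negative definite (as $\der(J)$ is a compact real form of $\f4$ when $\gamma=I$, this is immediate; for $\gamma=\diag\{1,-1,1\}$ one checks the two derivation algebras are isomorphic as real Lie algebras, e.g.\ because the isotopes $\mathcal{H}_3(\OO,\gamma)$ and $\mathcal{H}_3(\OO,I)$ have isomorphic structure groups) and that $k\vert_{J_0}$ is positive definite, using that the trace form $t_J(x\cdot y)$ is positive definite on the octonionic Hermitian matrices with $\gamma=I$. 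Then $\der(J)\oplus\ii J_0$ carries a negative-definite Killing form, hence is compact, so $\theta$ is the Cartan involution attached to $\sigma$ and~\eqref{eq_signapartirparte fija} applies.

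The main obstacle will be the case $\gamma=\diag\{1,-1,1\}$, where $\mathcal{H}_3(\OO,\gamma)$ is a \emph{non-definite} Albert algebra and neither $\der(J)$ nor the trace form on $J_0$ is manifestly definite. The honest content of the proposition is precisely that both choices of $\gamma$ yield the \emph{same} real form. I would handle this either by exhibiting an explicit isomorphism $\mathcal{H}_3(\OO,\gamma)\cong\mathcal{H}_3(\OO,I)$ of Jordan algebras (diagonal sign-change conjugations permuting the off-diagonal entries, exploiting that $\OO$ is a division algebra so no new isotropic vectors appear), whence the two Tits constructions coincide verbatim, or by computing the signature of the trace form $t_J(x\cdot y)$ on $J_0$ for the indefinite $\gamma$ and checking it is still definite because the octonionic norm $n\vert_\OO$ is positive definite and the diagonal signs only reshuffle contributions that remain of one sign. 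Once the signature of $k\vert_{J_0}$ is pinned down as $+26$ and $k\vert_{\der(J)}$ as $-52$, the total signature is $-26$ for both $\gamma$, completing the identification with $\sig{26}$.
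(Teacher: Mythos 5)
Your computation for $\gamma=I$ is correct and is essentially the Cartan-decomposition argument that the paper itself uses later (in the construction of the model based on $\sp31$, where it is observed that $\der(\mathcal{H}_3(\OO,I))\oplus\ii J_0$ is compact because $\der(\mathcal{H}_3(\OO,I))\cong\mathfrak{f}_{4,-52}$ and $t_J$ is positive definite). Note that the paper's own proof of this proposition is nothing more than a citation of Jacobson's table of the real forms produced by $\T(\CC,J)$ when $\dim\CC=2$, so attempting a direct signature computation is a legitimate, more self-contained route.

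However, your treatment of $\gamma=\diag\{1,-1,1\}$ breaks down. The Jordan algebras $\mathcal{H}_3(\OO,I)$ and $\mathcal{H}_3(\OO,\diag\{1,-1,1\})$ are \emph{not} isomorphic: they are two of the three distinct real forms of the complex Albert algebra (the paper explicitly calls the latter ``the nonsplit and noncompact real form''), and they are separated by the signature of the trace form, which is $27$ for $\gamma=I$ but $3+8-8-8=-5$ for $\gamma=\diag\{1,-1,1\}$, since the hermitian condition $x_{ji}=\gamma_j\bar x_{ij}\gamma_i^{-1}$ makes the $(1,2)$ and $(2,3)$ octonionic blocks contribute $-n(x_{ij})$. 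So both of your fallback routes fail: there is no diagonal sign-change isomorphism onto $\mathcal{H}_3(\OO,I)$, and the trace form on $J_0$ is indefinite of signature $-6$, not positive definite. Correspondingly $\der(\mathcal{H}_3(\OO,\diag\{1,-1,1\}))\cong\mathfrak{f}_{4,-20}$ rather than the compact form, so $\der(J)\oplus\ii J_0$ is not compact, $\theta$ is not a Cartan involution of $L$ in this case, and Equation~(\ref{eq_signapartirparte fija}) cannot be applied with this $\theta$. The computation can be repaired along your general lines: the two graded pieces are $k$-orthogonal (Remark~\ref{re_ortogonales}), $k\vert_{\der(J)}$ and $k\vert_{J_0}$ are fixed positive multiples of the Killing form of $\der(J)$ and of the trace form on $J_0$ respectively (the proportionality constants live in one-dimensional spaces of invariant forms and are determined on the complexification, hence can be read off from the definite case $\gamma=I$), giving $\sign k=-20+(-6)=-26$ again. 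But as written, your proof only establishes the statement for $\gamma=I$.
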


\begin{proof}
 In fact, Jacobson described in   \cite[Eq.~(147)]{Jacobsondeexcepcionales}   all the real forms of $\e6$
 obtained when applying $\T(\CC,J)$   
 to a composition algebra $\CC$ of dimension 2: 
\begin{center}
 {\small
 \begin{tabular}{c||ccc|}
 $ \CC\,/\,J$ & $\mathcal{H}_3(\OO,I)$& $\mathcal{H}_3(\OO,\diag\{1,-1,1\})$& $\mathcal{H}_3(\OO_s,I)$\\
\hline \hline\vrule width 0pt height 12pt
 $\C$& $\sig{78}$&$\sig{14}$&$\mathfrak{e}_{6,2}$\\
 $ \R\oplus \R$& $\sig{26}$&$\sig{26}$&$\mathfrak{e}_{6,6}$\\ \hline
 \end{tabular}}
 \end{center}
 \end{proof}
 
 Second, if we now want to get real forms of $\e6$ using composition algebras $\CC$ of dimension 8 in  $\T(\CC,J)$, note that the Jordan algebra $\mathcal{H}_3( \R\oplus \R,I)$ (here $\overline{(a,b)}=(b,a)$ is the exchange involution) is naturally isomorphic to $\M:=\Mat_{3\times3}(\R)^+$.
 
 \begin{pr}\label{pr_constTitsnuestra}
$\T(\OO,\M)\cong\sig{26}$, being
$\M=\Mat_{3\times3}(\R)^+$.
\end{pr}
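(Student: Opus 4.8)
The plan is to show that $\T(\OO,\M)$ is a real form of $\e6$ and then to pin down \emph{which} real form by computing the signature of its Killing form through formula \eqref{eq_signapartirparte fija}. That $\T(\OO,\M)$ is a real form of type $E_6$ is immediate: by the isomorphism $\M\cong\mathcal{H}_3(\R\oplus\R,I)$ recalled just before the statement, $\T(\OO,\M)=\T(\OO,\mathcal{H}_3(\R\oplus\R,I))$ is the Tits construction on the Hurwitz algebras $\OO$ and $\R\oplus\R$ of dimensions $8$ and $2$, so it sits in the $E_6$ box of Freudenthal's square. Here $\der(\OO)=\g2$ (the compact $G_2$, since $\OO$ is the division algebra), $\OO_0\otimes\M_0$ has dimension $7\cdot8=56$, and a dimension count against $\dim\e6=78$ forces $\der(\M)\cong\mathfrak{sl}_3(\R)$ (indeed the derivations of $\Mat_3(\R)^+$ are the inner ones $[a,\cdot]$, $a\in\mathfrak{sl}_3(\R)$).

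Next I would produce an explicit order-two automorphism $\theta$ and check it is a Cartan involution. The natural candidate is the map induced by the transpose $\phi\colon x\mapsto x^t$, which is an automorphism of the Jordan algebra $\M=\Mat_3(\R)^+$. Letting $\theta$ act as the identity on $\der(\OO)$, as $\id\otimes\phi$ on $\OO_0\otimes\M_0$, and by conjugation $D\mapsto\phi D\phi^{-1}$ on $\der(\M)$, one checks using the bracket rules \eqref{eq_TitsProduct} (together with $t_J(x^ty^t)=t_J(xy)$ and $x^t\ast y^t=(x\ast y)^t$) that $\theta$ is a well-defined automorphism with $\theta^2=\id$. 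Since $\phi$ induces $a\mapsto-a^t$ on $\der(\M)\cong\mathfrak{sl}_3(\R)$, its fixed subalgebra is $\der(\OO)\oplus(\OO_0\otimes\Sym_0)\oplus\mathfrak{so}_3$, where $\Sym_0$ denotes the symmetric traceless matrices. Hence $\dim\fix(\theta)=14+7\cdot5+3=52$, exactly the value attached to $\sig{26}$ in Table~\ref{tabladeautomorfysignaturas}.

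To finish, by \eqref{eq_signapartirparte fija} it suffices to know that $\theta$ is a Cartan involution, i.e. that $k_\mathfrak{g}$ is negative definite on $\mathfrak{k}:=\fix(\theta)$; then $n_-\ge52$, so $\sign k_\mathfrak{g}=78-2n_-\le-26$, and since $\mathfrak{g}$ is non-compact (it contains the split $\der(\M)\cong\mathfrak{sl}_3(\R)$, realized by $\ad$-diagonalizable-over-$\R$ elements) the value $-78$ is excluded; as the admissible signatures are $6,2,-14,-26,-78$, this forces $\sign k_\mathfrak{g}=-26$, whence $\T(\OO,\M)\cong\sig{26}$. The main obstacle is precisely the definiteness of $k_\mathfrak{g}|_\mathfrak{k}$, which I expect to settle from the explicit Killing form of the Tits construction: on $\der(\OO)=\g2$ it is negative definite (compact $G_2$), while on $\OO_0\otimes\Sym_0$ it is a fixed scalar multiple of $n|_{\OO_0}\otimes t_J|_{\Sym_0}$, both factors being positive definite (the octonionic norm is Euclidean and $\tr(x^2)>0$ on symmetric matrices), so only the sign of the universal constant in that Killing form must be pinned down. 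The same computation shows $k_\mathfrak{g}$ is positive definite on the complementary $\OO_0\otimes\mathrm{Skew}$ and on $\Sym_0\subset\der(\M)$, where $\tr(x^2)<0$ reverses the sign, confirming $\theta$ is Cartan and $\mathfrak{k}$ is the compact $\f4$.

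As an alternative shortcut, one may invoke the symmetry of Freudenthal's square: if $\T(\OO,\mathcal{H}_3(\R\oplus\R,I))\cong\T(\R\oplus\R,\mathcal{H}_3(\OO,I))$ as \emph{real} Lie algebras, then Proposition~\ref{pr_constjacobson} gives the conclusion at once. I would nevertheless keep the signature computation as the primary route, since it stays within the machinery of Section~\ref{sec_prelimirealforms}, whereas this last isomorphism still requires justifying that the magic-square symmetry respects real forms.
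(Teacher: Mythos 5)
Your proposal is correct in outline but follows a genuinely different route from the paper. The paper computes the signature of the Killing form directly: it shows the three summands $\der(\OO)$, $\OO_0\otimes\M_0$, $\der(\M)$ are pairwise $k$-orthogonal, identifies $k$ on the two derivation algebras as positive multiples of $k_{\g2}$ and $k_{\mathfrak{a}_2}$ (signatures $-14$ and $2$), and shows $k(a\otimes x,b\otimes y)=-60\,n(a,b)t_\M(x\cdot y)$, whence the middle piece contributes $-7\cdot 2$ and the total is $-26$. You instead exhibit an explicit order-two automorphism $\theta$ built from the transpose on $\M$, compute $\dim\fix(\theta)=52$, and invoke Equation~(\ref{eq_signapartirparte fija}) together with Table~\ref{tabladeautomorfysignaturas}; your refinement via $n_-\ge 52$ plus non-compactness is a nice way to avoid checking positive-definiteness on the odd part. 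What your route buys is structural insight (it exhibits the compact $\f4=\fix(\theta)$ inside $\T(\OO,\M)$, in the spirit of the paper's later Cartan-decomposition arguments); what the paper's route buys is that it never has to justify that a candidate involution is actually a Cartan involution. Note, however, that both arguments bottom out in exactly the same computation: the sign of the universal constant $\alpha$ in $k(a\otimes x,b\otimes y)=\alpha\,n(a,b)t_\M(x\cdot y)$, which the paper itself only asserts (``checking that this number is negative is a tedious task''), so you have not escaped that step, only relocated it; you would also still need the orthogonality of the three summands and the compactness of $k$ on $\der(\OO)$, i.e.\ items a) and b) of the paper's proof. One small slip in wording: on the symmetric traceless part of $\der(\M)\cong\mathfrak{sl}_3(\R)$ one has $\tr(x^2)>0$ (it is on the skew-symmetric part of $\M_0$ that $\tr(x^2)<0$); your sign conclusions are nevertheless the right ones.
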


 \begin{proof}
 We are computing directly the signature of the Killing form. Observe the following facts:
 \begin{itemize}
 \item[a)]  $\der\,(\OO)$, $\OO_0 \otimes \M_0$ and $\der\,(\M)$ are three orthogonal subspaces for the Killing form.
 \item[b)] If $d,d'\in \der\,(\OO)$, then $k(d,d')=12\tr(dd')=3k_{\g2}(d,d')$, denoting by $k_{\g2}$ the Killing form of the algebra $\der(\OO)=\frak{g}_{2,-14}$.
 This implies that the signature of $k\vert_{\der\,(\OO)\times\der\,(\OO)}$ is the same as the one of $k_{\g2}$, that is, -14.
 \item[c)] If $D,D'\in \der\,(\M)$, then $k(D,D')=8\tr(DD')=8k_{\frak{a}_2}(D,D')$, denoting by $k_{\frak{a}_2}$ the Killing form of the algebra $ \der(\M)\cong\mathfrak{sl}(3,\R)=\frak{a}_{2,2}$.
 This implies that the signature of $k\vert_{\der\,(\M)\times\der\,(\M)}$ is the same   as the one of $k_{\frak{a}_2}$, that is, 2.
 \item[d)] For each $a,b\in\OO_0$ and $x,y\in\M_0$, the Killing form $k(a\otimes x,b\otimes y)=-60 n(a,b)t_\M(x\cdot y)$, for $n$ the polar form of the norm $n$ of the octonion algebra, that is, $n(a,b)=\frac12t_{\OO}(a\bar b)$. As $n$  is positive definite, the signature of $k\vert_{\OO_0 \otimes \M_0}$  will coincide with 7 times the signature of the traceform of $\M_0$ (the bilinear form $(x,y)\mapsto\tr(x\cdot y)$), equal to 2.  
 \end{itemize}
  Consequently, the signature of $\T(\OO,\M)$ turns out to be
 $-14+2-7\cdot 2=-26,$ as required.
 
 The proof of the above items can be done by following the next lines. 
 \begin{itemize} 
 
 \item For instance, for a), take   $d\in\der\,(\OO)$ and $D\in \der\,(\M)$, and note that $(\ad d\ad D)\vert_{\der\,(\OO)\oplus \der\,(\M)}=0$. Thus $k(d,D)=\tr (\ad d\ad D)\vert_{\OO_0 \otimes \M_0}= \tr (d\vert_{\OO_0}\otimes D\vert_{\M_0})=(\tr d\vert_{\OO_0})(\tr D\vert_{\M_0})=0$, since the endomorphisms contained in $\der(\OO)=[\der(\OO),\der(\OO)]$ have zero trace. 
 
 \item  For b), the relation $\tr(dd')=4k_{\g2}(d,d')$ is proved in \cite{Jacobsondeexcepcionales}. But $k(d,d')=k_{\g2}(d,d')+0+\tr(dd'\vert_{\OO_0}\otimes \id_{\M_0})$ and now we use again that the trace of the Kronecker product is the product of the traces. 
 
 \item  Similar arguments are applied in c) for $K(D,D')=\tr (\ad D\ad D')\vert_{\der\,(\M)}+\tr(\id_{\OO_0}\otimes DD'\vert_{\M_0})=k_{\frak{a}_2}(D,D')+7\tr(DD')$. Now the matrix of $DD'$ in a basis $\{v_i\}_{i=1}^8$ of $\M_0$ coincides with the matrix of $\ad D\ad D'$ in $\{\ad v_i\}_{i=1}^8$, which is a basis of $\der(\M)=\{[v,-]\mid v\in\mathfrak{sl}(3,\R) \}$. 
 
 \item For d) note that $\OO_0 \otimes \M_0$ is a $\der\,(\OO)\oplus \der\,(\M)$-irreducible module and the restriction $k\vert_{(\OO_0 \otimes \M_0)\times(\OO_0 \otimes \M_0)}$
 can be considered as an element in $\hom_{\der\,(\OO)\oplus \der\,(\M)}(S^2(\OO_0 \otimes \M_0),\R)$, which is a one-dimensional vector space.
 In consequence, there must exist $\alpha\in\R$ such that $k(a\otimes x,b\otimes y)=\a n(a,b)t_\M(x\cdot y)$  for any $a,b\in\OO_0$ and $x,y\in\M_0$.   Checking that this number $\a$ is negative (namely, $-60$) is a tedious task.   
 

 \end{itemize}

\end{proof}

\subsection{Model based on $\sp31  $}

    Take  $\L=\der(J)\oplus J_0$ for the Albert algebra $J=\mathcal{H}_3(\OO,I)$. 
    As in    Proposition \ref{pr_constjacobson},
     $\L\cong \T(\R\oplus \R,J)$ is a real form of $\e6$ of signature $-26$. 
    Let $\theta\in\aut(\L)$ be the automorphism given by $\theta\vert_{\der(J)}=\id$ and $\theta\vert_{J_0}=-\id$, in other words, the automorphism producing the $\Z_2$-grading on $\L$. In fact, $\theta$ is the Cartan involution related to $\L$ since $\der(J)\oplus\ii J_0$ is obviously compact. ($\mathcal{H}_3(\OO,I)$ is the Albert algebra, $t_J$ is positive definite and $\der(J)$ is the compact Lie algebra $\mathfrak{f}_{4,-52}$.)
    Let $\nu\colon J\to J$ be the automorphism of the Jordan algebra  fixing  $ {\mathcal{H}_3(\H,I)}$ and acting with eigenvalue $-1$ in the elements
    $$
    \left(\begin{array}{ccc}
    0&a&\bar b\\
    \bar a&0&c\\
    b&\bar c&0
    \end{array}\right),  
    $$
    for all $a,b,c\in\H^\perp=\H \textbf{l}$. Obviously $\nu$ is an order 2 automorphism such that $\dim\fix(\nu)=15$ and $\dim \{x\in J\mid \nu(x)=-x\}=12$.
    We denote by the same symbol $\nu$ to the automorphism of $\L$ given by
    $$
    d+x\mapsto \nu d\nu^{-1}+\nu(x)
    $$
    if $d\in\der(J)$ and $x\in J_0$. 
   As $\theta$ and $\nu $ commute, the automorphism $\nu':=\theta\nu=\nu\theta$ has again order two and  produces another $\Z_2$-grading
   $\L=\L_{\bar0}\oplus\L_{\bar1}$.
    The  subalgebra $\L_{\bar0} $ fixed by $\nu'$,  
   $$
   \L_{\bar0}= \{d\in\der(J)\mid \nu(d)=d\}\oplus \{x\in J_0\mid \nu(x)=-x\},
   $$ 
  has dimension $24+12=36$, since the first summand has type $C_3+A_1$ (the algebra $\der(\mathcal{H}_3(\H,I))$ is of type $C_3$).
     In particular  $\L_{\bar0}$ is a real form of   $\c4$ (see  Table \ref{tabladeautomorfysignaturas}). 
     Let us compute its signature with the help of Equation~(\ref{eq_signapartirparte fija}).  If $\sigma_0$ denotes the conjugation of
     $S_0=\L_{\bar0}^\C$ related to $\L_{\bar0}$ ($=S_0^{\sigma_0}$), let us check first that $\theta\sigma_0$ is a compact conjugation of $S_0$.
     For that aim, note that 
     $$
     S_0^{\theta\sigma_0}=\{d\in\der(J)\mid \nu(d)=d\}\oplus\ii \{x\in J_0\mid \nu(x)=-x\},
     $$
     is the even part of the $\Z_2$-grading of $\der(J)\oplus\ii J_0\cong\sig{78}$ produced by $\nu'$ (extended to the complexification $S=\L^\C$, and later restricted here), what implies its compactness.  
     Indeed, if $K=K_0\oplus K_1$ is a $\Z_2$-graded compact algebra with $K_0$ semisimple, then $K_0$ is compact too
     (one can choose a Cartan subalgebra of $K$ containing a Cartan subalgebra $\mathfrak h_0$ of $K_0$, thus any of the elements in $\mathfrak h_0$ has spectrum contained in $\R\ii$). 
     Now the compact conjugation $\theta\si_0$ allows us to compute the order two automorphism related to $\si_0$, that is,
         $\Phi([\si_0])=[\theta\vert_{\L_{\bar0}}]$. Thus
     $$
     \begin{array}{rl}
     \sign(\L_{\bar0})&=36-2\dim\fix(\theta\vert_{\L_{\bar0}})=36-2\dim(\fix(\theta)\cap\fix(\nu'))\\
    &=36-2\dim(\fix(\theta)\cap\fix(\nu))=36-2\cdot 24=-12,
     \end{array}
     $$
     which forces $\L_{\bar0}$ to be isomorphic to $\sp31=\{x\in\Mat_{4\times4}(\H)\mid x^tI_{31}+I_{31}\bar x=0\}$ ($I_{31}=\diag\{1,1,1,-1\}$), the only real form of $\c4$ with signature $-12$. To summarize, we have proved the following result:
     
     \begin{pr}
     There exists an $\sp31$-irreducible module $\mathcal{U}$ such that $\sig{26}=\sp31\oplus \mathcal{U}$.
     \end{pr}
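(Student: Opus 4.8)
The plan is to read the module structure straight off the grading and then reduce the only substantive point, irreducibility, to a computation over $\C$. Since $\nu'$ is an involution, $[\L_{\bar0},\L_{\bar1}]\subseteq\L_{\bar1}$, so $\mathcal{U}:=\L_{\bar1}$ is automatically a module over $\L_{\bar0}\cong\sp31$, and the decomposition $\sig{26}=\L_{\bar0}\oplus\L_{\bar1}=\sp31\oplus\mathcal{U}$ is nothing but the $\Z_2$-grading. From $\dim\sig{26}=78$ and $\dim\sp31=36$ one reads off $\dim\mathcal{U}=42$. Thus the whole content of the statement is that this $42$-dimensional module is irreducible, and that is what I would focus on.

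First I would pass to complexifications, using that a real module is irreducible as soon as its complexification is. Extending $\nu'$ $\C$-linearly to $\e6=\L^\C$ gives an involution whose fixed subalgebra is $(\sp31)^\C=\mathfrak{sp}(8,\C)=\c4$ and whose $(-1)$-eigenspace is $\mathcal{U}^\C$. Hence it suffices to show that the $42$-dimensional $\c4$-module complementing $\c4$ inside $\e6$ is irreducible over $\C$.

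For this I would combine three facts. (i) Complete reducibility: $\c4$ is semisimple, so by Weyl's theorem $\mathcal{U}^\C$ is a direct sum of irreducibles. (ii) Absence of a trivial summand: if $0\neq X\in\mathcal{U}^\C$ were fixed by $\c4$, then $[X,Y]\in\c4$ for every $Y\in\mathcal{U}^\C$, and invariance of the Killing form, $k([X,Y],W)=-k(Y,[X,W])$, together with $[X,W]=0$ for all $W\in\c4$, yields $k([X,Y],W)=0$; nondegeneracy of $k$ on $\c4$ then forces $[X,Y]=0$, so $X$ would be central in $\e6$, contradicting simplicity. (iii) A dimension count: the nontrivial irreducible $\c4$-modules of dimension at most $42$ have dimensions $8$, $27$, $36$ and $42$, and $42$ cannot be written as a sum of copies of $8$, $27$ and $36$. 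Combining (i)--(iii), the only possibility compatible with $\dim\mathcal{U}^\C=42$ is the single $42$-dimensional irreducible (the fundamental module of highest weight $\omega_4$), whence $\mathcal{U}^\C$, and therefore $\mathcal{U}$, is irreducible.

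The main obstacle is precisely this irreducibility, and the reason it needs care is that $\sp31$ is \emph{non-compact}: one cannot build an invariant complement by averaging over a compact group, so complete reducibility must be invoked through Weyl's theorem rather than through compactness, and the classical theorem that the isotropy representation of a Riemannian symmetric pair of simple type is irreducible does not apply verbatim, our pair being pseudo-Riemannian. The complexification together with the fixed-vector argument in (ii) circumvents this; the only genuinely computational ingredient that remains is the short list of small-dimensional $\c4$-modules used in (iii), which is routine.
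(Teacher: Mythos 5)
Your reduction of the statement to the irreducibility of $\mathcal{U}=\L_{\bar 1}$ skips precisely the point on which the paper's proof concentrates. The paper's argument is the computation preceding the proposition: having built the involution $\nu'=\theta\nu$ and observed that its fixed subalgebra $\L_{\bar0}$ has dimension $24+12=36$, hence is a real form of $\c4$ by Table~\ref{tabladeautomorfysignaturas}, the substantive task is to decide \emph{which} real form it is. There are four candidates (the compact form, the split form $\mathfrak{sp}_8(\R)$, $\mathfrak{sp}_{2,2}(\H)$ and $\sp31$, with signatures $-36$, $4$, $-4$ and $-12$ respectively), and your proposal simply writes ``$\L_{\bar0}\cong\sp31$'' with no justification. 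That is a genuine gap: as written you have only shown that $\sig{26}$ is some real form of $\c4$ plus an irreducible module. The paper closes this by computing the signature of $\L_{\bar0}$: it checks that $\theta\sigma_0$ is a compact conjugation of $\L_{\bar0}^{\C}$ (using that the even part of a $\Z_2$-graded compact algebra with semisimple even part is compact), so that $\Phi([\sigma_0])=[\theta\vert_{\L_{\bar0}}]$, and then Equation~(\ref{eq_signapartirparte fija}) gives $\sign \L_{\bar0}=36-2\dim(\fix\theta\cap\fix\nu)=36-2\cdot 24=-12$, which singles out $\sp31$. Any complete proof must contain this (or an equivalent) identification.

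The part you do prove carefully is correct, and it is actually more than the paper writes down at this point: the paper leaves irreducibility implicit, only invoking in the following proposition that $\mathcal{U}^{\C}\cong V(\lambda_4)$. Your chain of reasoning --- complexify; apply Weyl's theorem; exclude a trivial summand because a $\c4$-fixed vector in $\mathcal{U}^{\C}$ would be central in $\e6$ by invariance and nondegeneracy of the Killing form; observe that $42$ is not a sum of $8$'s, $27$'s and $36$'s, the only dimensions of nontrivial irreducible $\c4$-modules below $42$ --- is sound, and your caveat that one cannot average over a compact group here is well taken. So keep the irreducibility argument as a useful supplement, but the signature computation identifying $\L_{\bar0}$ with $\sp31$ must be added for the proof to stand.
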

     
     We will also need to know that the existence of a subalgebra isomorphic to $\sp31$ determines, in some sense, the real form. For $L=L_0\oplus V$   a real $\Z_2$-graded algebra and $t\in\R$, denote by $L^t:=(L,[\ ,\ ]^t)$ the Lie algebra with the same underlying vector space but new product given by
     \begin{equation}\label{eq_Lapareada}
     [x_0+v_0,x_1+v_1]^t=[x_0,x_1]+[x_0,v_1]+[v_0,x_1]+t[v_0,v_1],
     \end{equation}
   if $x_i\in L_0$ and $v_i\in V$. If $t>0$, the map   which sends $x_0+v_0$ to $x_0+\sqrt{t}v_0$ is a Lie algebra isomorphism between $L$ and $L^t$, but in general $L$ is not isomorphic to $L^{-1}$.  

     \begin{pr}\label{pr_lasquecontienensp31}
     If there is an $\sp31$-irreducible module  $\mathcal{U}'$ such that $\sp31\oplus \mathcal{U}'$ is a $\Z_2$-graded real form of $\e6$, then this real form must have signature either $-26$ or $2$.
     \end{pr}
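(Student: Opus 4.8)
The plan is to exploit two facts: that invariant symmetric bilinear forms on the simple algebra $\sp31$ are essentially unique, and the sign-flip duality $L\mapsto L^{-1}$ just introduced in Equation~(\ref{eq_Lapareada}). Write $L=\sp31\oplus\mathcal{U}'$ with $L_0:=\sp31$. Since $\dim\e6=78$ and $\dim\sp31=36$, we have $\dim\mathcal{U}'=42$; moreover the $\Z_2$-grading together with the invariance of the Killing form $k_L$ forces $L_0\perp\mathcal{U}'$ (for $x\in L_0$ and $v\in\mathcal{U}'$ the operator $\ad x\,\ad v$ is odd, hence trace-free). Both restrictions $k_L|_{L_0}$ and $k_L|_{\mathcal{U}'}$ are therefore nondegenerate, and $\sign k_L=\sign k_L|_{L_0}+\sign k_L|_{\mathcal{U}'}$.

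First I would pin down $\sign k_L|_{L_0}$. Splitting $\ad x$ along the grading gives, for $x,y\in L_0$,
\[
k_L(x,y)=k_{L_0}(x,y)+\tr\!\big(x|_{\mathcal{U}'}\,y|_{\mathcal{U}'}\big),
\]
so $k_L|_{L_0}=k_{L_0}+\beta$, where $\beta$ is the trace form of the module $\mathcal{U}'$. As $L_0$ is simple, $\beta$ and $k_{L_0}$ are proportional; the proportionality constant is preserved under complexification, where it equals the Dynkin index of $(\mathcal{U}')^{\C}$ divided by that of the adjoint, a strictly positive number $c$. Hence $k_L|_{L_0}=(1+c)k_{L_0}$ with $1+c>0$, so $\sign k_L|_{L_0}=\sign k_{L_0}=-12$. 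Writing $\mu:=\sign k_L|_{\mathcal{U}'}$, this yields $\sign k_L=-12+\mu$.

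Next I would bring in the dual form $L^{-1}$. The complex-linear map that is the identity on $L_0^{\C}$ and multiplication by $\ii$ on $(\mathcal{U}')^{\C}$ is a Lie algebra isomorphism $(L^{-1})^{\C}\to L^{\C}$, so $L^{-1}$ is again a real form of $\e6$. Because $\ad_{L^{-1}}x=\ad_L x$ for $x\in L_0$, we still have $\sign k_{L^{-1}}|_{L_0}=-12$; and since negating the bracket on $\mathcal{U}'$ negates both graded block-components of $\ad v\,\ad w$ for $v,w\in\mathcal{U}'$, we get $k_{L^{-1}}|_{\mathcal{U}'}=-k_L|_{\mathcal{U}'}$. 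Thus $\sign k_{L^{-1}}=-12-\mu$.

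Finally, both $-12+\mu$ and $-12-\mu$ are signatures of real forms of $\e6$, hence lie in $\{6,2,-14,-26,-78\}$ by Table~\ref{tabladeautomorfysignaturas}. This imposes $\mu\in\{18,14,-2,-14,-66\}$ and simultaneously $\mu\in\{-18,-14,2,14,66\}$; the only common values are $\mu=\pm14$, giving $\sign k_L=-12\pm14\in\{-26,2\}$, as claimed. The main obstacle is the first step: proving that the constant $1+c$ is positive, so that $k_L|_{L_0}$ genuinely has signature $-12$ and not $+12$, and checking that the sign flip on $\mathcal{U}'$ produces exactly the factor $-1$ on both blocks of $\ad v\,\ad w$; the remaining bookkeeping is then immediate. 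Notice that this argument never needs the explicit identification of $\mathcal{U}'$ as an $\sp31$-module, only its dimension and the positivity of its index.
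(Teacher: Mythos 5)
Your argument is correct, and in its decisive step it takes a genuinely different route from the paper. Both proofs share the opening moves: the orthogonality $\sp31\perp\mathcal{U}'$ for the Killing form, the observation that $k_L\vert_{\sp31}$ is a multiple of the Killing form of $\sp31$, and the twist $L\mapsto L^{-1}$ with the identity $\sign k_L+\sign k_{L^{-1}}=2\sign k_L\vert_{\sp31}$. After that you diverge. The paper first identifies your $L$ with one of two explicit algebras: it invokes the uniqueness of the decomposition $\e6=\c4\oplus V(\lambda_4)$, the isomorphism $\mathcal{U}'\cong\mathcal{U}$ as real modules, and the fact that $\dim_\C\hom_{\c4}(\Lambda^2 V(\lambda_4),\c4)=1$ to conclude that $L\cong\L$ or $L\cong\L^{-1}$ where $\L=\sp31\oplus\mathcal{U}$ is the signature $-26$ form already constructed; it then computes $\sign\L^{-1}=2(-12)-(-26)=2$ using the explicitly computed constant $\delta=\frac{12}{5}$. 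You instead never compare $L$ with a known model: you pin down $\sign k_L\vert_{\sp31}=-12$ via positivity of the Dynkin index (replacing the paper's "tedious" computation of $\delta$ by the mere observation $1+c>0$, which suffices), and then close the argument by intersecting the two constraints $-12\pm\mu\in\{6,2,-14,-26,-78\}$ coming from Table~\ref{tabladeautomorfysignaturas}. What your route buys is the elimination of the representation-theoretic uniqueness lemma for $\hom_{\c4}(\Lambda^2 V(\lambda_4),\c4)$ and of the module identification $\mathcal{U}\cong\mathcal{U}'$ altogether; what it costs is the explicit use of the classification of signatures of real forms of $\e6$, which the paper has available anyway. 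All the individual steps you flag as "the main obstacle" do check out: the complex-linear map that is $\ii$ on the odd part shows $(L^{-1})^\C\cong L^\C$, and for $v,w\in\mathcal{U}'$ both blocks of $\ad^{-1}v\,\ad^{-1}w$ indeed acquire exactly one factor of $-1$, since in each composite exactly one of the two brackets is an odd--odd bracket.
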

     
     \begin{proof}
     Denote   $\L=\sp31\oplus \mathcal{U}$ and   $\L'=\sp31\oplus \mathcal{U}'$. After complexifying, we get in both cases a decomposition of $\e6$ as a sum of $\c4$ and a $\c4$-module. But this decomposition is unique, so that the $\c4$-modules $\mathcal{U}^\C$ and $\mathcal{U}'^\C$ are necessarily isomorphic, of type $V(\lambda_4)$ for $\lambda_4$ the fundamental weight (\cite{Humphreysalg}).
     As $\mathcal{U}^\C=\mathcal{U}\oplus\ii\mathcal{U}$ is a real $\sp31$-module isomorphic to $\mathcal{U}\oplus \mathcal{U}$, then $ \mathcal{U}$ is a submodule of $ \mathcal{U}\oplus  \mathcal{U}\cong \mathcal{U}'\oplus \mathcal{U}'$ and hence one of the projections   $\pi_i\colon  \mathcal{U}\to \mathcal{U}'$ on each of the two copies of $ \mathcal{U}'$ will be nonzero ($i=1,2$). By irreducibility, such $\pi_i$ will be an isomorphism of $\sp31$-modules which will allow to identify $ \mathcal{U}$  and $ \mathcal{U}'$. Thus   we can replace $ \mathcal{U}'$ with $ \mathcal{U}$ in the definition of $\L'$ without loss of generality, so that now $\L$ and $\L'$ coincide as vector spaces, as well as the bracket on the even par $\sp31$ and the action of the even part on the odd one. Denote by $[\ ,\ ]$ (respectively $[\ ,\ ]'$) the restriction of the bracket $\mathcal{U}\times\mathcal{U}\to \L$ (respectively $\mathcal{U}\times\mathcal{U}\to \L'$) to $\mathcal{U}\times\mathcal{U}\to \sp31$. Thus we can consider both 
$[\ ,\ ],[\ ,\ ]'\in\hom_{\sp31}(\Lambda^2(\mathcal{U}),\sp31)$. This space has real dimension equal to 1, since 
$\dim_{\C}\hom_{\c4}(\Lambda^2(V(\lambda_4)),\c4\cong V(\lambda_2))=1$ (a well-known fact of representation theory). Hence there is $t\in\R$ such that $[u,v]'=t[u,v]$ for all $u,v\in\mathcal{U}$. 
Thus $\L'\cong\L^t$ (Equation~(\ref{eq_Lapareada})), so that $\L'$ is isomorphic either to $\L$ or to $\L^{-1}$, according to $t$ being either positive or negative.

Let us check that the signature of the Killing form $k^{-1}$ of $\L^{-1}$ is equal to 2 by relating it with the signature of the Killing form $k$ of $\L$. First note that $\sp31$ and   $\mathcal{U}$  
  are orthogonal relative to both $k$ and $k^{-1}$, since we have a $\Z_2$-grading $\L=\L_{\bar0}\oplus \L_{\bar1}$, so, if $x\in\L_{\bar0}$ and $v\in\L_{\bar1}$, then $\ad^{\pm1} x\ad^{\pm1} v$ interchanges $ \L_{\bar0}$ and $ \L_{\bar1}$ and is a zero trace endomorphism. Thus, the related signatures satisfy
 \begin{equation}\label{eq_signLp}
 \begin{array}{l}
 \sign(k)=\sign(k\vert_{\L_{\bar0}\times\L_{\bar0}})+
 \sign(k\vert_{\mathcal{U}\times\mathcal{U}}),\\
 \sign(k^{-1})=\sign(k^{-1}\vert_{\L_{\bar0}\times\L_{\bar0}})+
 \sign(k^{-1}\vert_{\mathcal{U}\times\mathcal{U}}).
 \end{array}
 \end{equation}
 But for any $t\in\R$ we have $\ad^{t}x=\ad x$ if $x\in\L_{\bar0}$ and $\ad^{t}u\ad^{t}v=t\ad u\ad v$ if $u,v\in\mathcal{U}$, so that $k^{-1}(x,y)=k(x,y)$ and $k^{-1}(u,v)=-k(u,v)$ ($y\in\L_{\bar0}$). Thus $\sign(k\vert_{\L_{\bar i}\times\L_{\bar i}})=
 (-1)^i\sign(k^{-1}\vert_{\L_{\bar i}\times\L_{\bar i}})$, and by Equation~(\ref{eq_signLp}) we get that 
 $$ 
 \sign(k)+ \sign(k^{-1})=2\sign(k\vert_{\L_{\bar0}\times\L_{\bar0}}).
 $$
Denote by $k_0$ the Killing form of $\L_{\bar0}=\sp31$. Both $k_0$ and 
$k\vert_{\L_{\bar0}\times\L_{\bar0}}$ are symmetric $\L_{\bar0}$-invariant bilinear forms, so that they can be considered as elements in 
$\hom_{\sp31}(S^2(\L_{\bar0}),\R)$, which is a one-dimensional vector space since its complexification is $\hom_{\c4}(S^2(V(\lambda_2)),\C\cong V(0))$  and the decomposition of $S^2(V(\lambda_2))$ into sum of irreducible submodules is $V(4\lambda_1)\oplus V(2\lambda_2 )\oplus V(\lambda_ 2)\oplus V(0)$. This means that there is $\delta\in\R$ such that $k\vert_{\L_{\bar0}\times\L_{\bar0}}=\delta k_0$. With some work in the complexified algebra, we can prove that in fact $\delta=\frac{12}{5}$. Consequently  $\sign(k\vert_{\L_{\bar0}\times\L_{\bar0}})$ coincides with $\sign k_0=-12$, so that $\sign k^{-1}=2(-12)-(-26)=2$, which finishes the proof.
         \end{proof}


\section{Preliminaries on gradings }

\subsection{Basic concepts }

The main reference about the topic is \cite{libro} (consult \cite{LGI} too for background on gradings on Lie algebras).

 Let $\A$ be a finite-dimensional algebra  over  $\F$, and $G$ an abelian group.
  A \emph{$G$-grading} $\Gamma$ on $\A$ is a vector space decomposition
$
 \Gamma: \A = \bigoplus_{g\in G} \A_g
 $
such that
$
 \A_g \A_h\subset \A_{g+h}$ for all $g,h\in G$.
 The subspace $\A_g$ will be referred to as \emph{homogeneous component of degree $g$} and its nonzero elements will be called   \emph{homogeneous elements of degree $g$}. The \emph{support} of the grading is the set $\supp \Gamma :=\{g\in G\mid \A_g\neq 0\}$.
 We will assume from now on that $\supp \Gamma$ generates $G$ (there is  no loss of generality).  The \emph{type} of   $\Gamma$ is the sequence of numbers $(h_1,\ldots, h_r)$ where $h_i$ is the number of homogeneous components of dimension $i$, with $i=1,\ldots, r$ and $h_r\neq 0$. Obviously, $\dim \mathcal{A} = \sum_{i=1}^r ih_i$.

  If $\Gamma\colon \A=\oplus_{g\in G} \A_g$ and $\Gamma'\colon \A=\oplus_{h\in H} \A'_{h}$ are gradings over two abelian groups $G$ and $H$, $\Gamma$ is said to be a \emph{refinement} of $\Gamma'$ (or $\Gamma'$ a \emph{coarsening} of $\Gamma$) if for any  $g\in G$ there is $h\in H$ such that $\A_g\subset \A'_{h}$.  A refinement is \emph{proper} if some inclusion $\A_g\subset \A'_{h}$ is proper. A grading is said to be \emph{fine} if it admits no proper refinement.
 Also, $\Gamma$ and $\Gamma'$ are said to be \emph{equivalent} if there is an algebra isomorphism  $\varphi\colon\A \rightarrow \A$ and a bijection $\alpha\colon  \supp \Gamma \rightarrow \supp \Gamma'$ such that $\varphi(\A_s)=\A'_{\alpha(s)}$ for all $s\in \supp \Gamma$. 
We are interested in classifying fine gradings up to equivalence, because any grading is obtained as a coarsening of some fine one.

For any $G$-grading $\Gamma\colon \A=\oplus_{g\in G} \A_g$,
the group $U(\Gamma)$ generated by $ \supp \Gamma$ with defining relations $s_1s_2 = s_3$ whenever $0\ne \A_{s_1}\A_{s_2}\subset \A_{s_3}$ is called \emph{universal group of} $\Gamma$, because it satisfies the following universal property: there is   a grading on $\A$ over $U(\Gamma)$ equivalent to $\Gamma$ such that for any other
       $G'$-grading $\Gamma'$ on $\A$ equivalent to $\Gamma$, there exists a unique group homomorphism $U(\Gamma)\to G'$ that restricts to the identity on $\supp \Gamma$.  
  Observe that the group $U(\Gamma)$ is necessarily abelian in case that $\A$ is a  simple Lie algebra \cite[Proposition 1.12]{libro}.

In the complex case, there is a duality between (abelian group) gradings and actions, which has been very useful for studying fine gradings.
If $\A=\oplus_{g\in G} \A_g$ is a $G$-grading, the map $\psi\colon  \X(G):=\text{hom} (G,\C^\times) \rightarrow
\aut(\A)$  which sends each character $\alpha \in \X(G)$ to the automorphism $\psi_{\alpha}\colon \A \rightarrow \A$ given by $\A_g\ni x \mapsto \psi_{\alpha}(x) :=\alpha (g) x$ is a group homomorphism. Since $G$ is finitely generated ($\A$ has finite dimension and $G$ is generated by the support), $\psi(\X(G))$ is an algebraic quasitorus. Conversely, if $Q$ is a quasitorus and $\psi\colon  Q \rightarrow \aut (\A)$ is a homomorphism, $\psi(Q)$ consists of semisimple automorphisms and we have a $\X(Q)$-grading $\A=\oplus_{g\in\X(Q)} \A_g$ given by $\A_g=\{x\in \A\mid \psi(q)(x)=g(q)x \  \forall q \in Q\}$, with $\X(Q)$ a finitely generated abelian group. Furthermore, if the original grading is fine, the quasitorus $\psi(\X(G))\le\aut(\A)$ is maximal (for $G$ the universal group of the grading) and conversely. To be precise, there is a one-to-one correspondence between   equivalence classes of fine gradings on (the complex algebra) $\A$ and   conjugacy classes of maximal quasitori of the group $\aut(\A)$ \cite[Proposition~1.32]{libro}. These maximal quasitori are also called \emph{MAD-groups}  (Maximal Abelian Diagonalizable). Unfortunately, the knowledge of the MAD-groups of $\aut(\L)$ for $\L$ a real Lie algebra, is not an equivalent problem to that one of classifying fine gradings on $\L$ up to equivalence \cite[\S4]{review}.

\subsection{Fine gradings on $\e6$ }

The fine gradings on the exceptional complex simple Lie algebra $\e6$ were classified (up to equivalence) in \cite{e6}, although also \cite{libro} and \cite{proceedingsLecce} contain  alternative descriptions of all these fine gradings. According to the classification, there are 14 fine gradings on $\e6$, whose universal groups and types are exhibited in   Table \ref{tablafinasdee6}.
Their symmetry groups are computed in \cite{Weyle6}, which contains several models adapted to the various gradings.

\smallskip

\begin{table}
\begin{tabular}{|c|c|c| }
\hline\vrule width 0pt height 10pt
 Grading & Universal group & Type   \cr
\hline\hline\vrule width 0pt height 12pt
 $\Gamma_{1}$ & $ \Z_3^4$   &  $( 72,0,2 )$   \cr
  \hline \vrule width 0pt height 13 pt
 $\Gamma_{2}$ & $\Z^2\times\Z_3^2$   &  $( 60,9 )$   \cr
\hline  \vrule width 0pt height 13 pt
$\Gamma_{3}$ & $\Z_3^2\times\Z_2^3$   &  $( 64,7 )$  \cr
\hline \vrule width 0pt height 13 pt
$\Gamma_{4}$ & $\Z^2\times\Z_2^3$   &  $( 48,1,0,7 )$ \cr
\hline \vrule width 0pt height 13 pt
$\Gamma_{5}$ & $\Z^6 $   &  $(72,0,0,0,0,1  )$  \cr
\hline \vrule width 0pt height 13 pt
$\Gamma_{6}$ & $\Z^4\times\Z_2$   &  $(72,1,0,1  )$  \cr
\hline \vrule width 0pt height 13 pt
$\Gamma_{7}$ & $ \Z_2^6$   &  $( 48,1,0,7 )$  \cr   
\hline \vrule width 0pt height 13 pt
$\Gamma_{8}$ & $\Z\times\Z_2^4$   &  $(  57,0,7)$  \cr
\hline \vrule width 0pt height 13 pt
$\Gamma_{9}$ & $\Z_3^3\times\Z_2$   &  $( 26,26 )$  \cr
\hline \vrule width 0pt height 13 pt
$\Gamma_{10}$ & $\Z^2\times\Z_2^3$   &  $(60,7,0,1   )$  \cr
\hline \vrule width 0pt height 13 pt
$\Gamma_{11}$ & $\Z_4\times\Z_2^4$   &  $(48,13,0,1     )$ \cr
\hline \vrule width 0pt height 13 pt
$\Gamma_{12}$ & $\Z \times\Z_2^5$   &  $( 73,0,0,0,1  )$ \cr
\hline \vrule width 0pt height 13 pt
$\Gamma_{13}$ & $ \Z_2^7$   &  $( 72,0,0,0,0,1  )$ \cr
\hline \vrule width 0pt height 13 pt
$\Gamma_{14}$ & $ \Z_4^3$   &  $( 48,15 )$ \cr
\hline %
 \end{tabular}\vspace{4pt} %
 
 \caption{Fine gradings on $\e6$}\label{tablafinasdee6}
 \end{table}
 
\subsection{Gradings on real forms }\label{sec_gradreales}

  If $\Gamma: L = \oplus_{g\in G} L_g$ is a grading on a real Lie algebra $L$, let us denote by $\Gamma^\C$ the grading on $L^\C$ given by   $\Gamma^{\C}: L^{\C}=  \oplus_{g\in G} (L_g)^\C=\oplus_{g\in G} (L_g \oplus \ii L_g)$. %
  These \emph{complexified gradings} will be very useful in the study of gradings on real Lie algebras.

 \begin{defi}
Let $\Gamma_1: S = \oplus_{g\in G} S_g$ be a grading on a complex Lie algebra $S$ and let $L$ be a real form of $S=L^\C$. We will say that $L$ \emph{inherits} the grading $\Gamma_1$ if there exists a grading  $\Gamma$ on $L$ such that $\Gamma^\C=\Gamma_1$. This happens if and only if $L$ is a graded subspace of $S$; that is, $L=\oplus_{g\in G} (L\cap S_g)$.
\end{defi}

\begin{re}{\rm
By abuse of notation, we will say that $\sig{26}$ inherits the grading $\Gamma_i$ in Table \ref{tablafinasdee6} ($i=1,\dots,14$) if some real form of $\e6$ of signature $-26$ has a grading whose complexified grading is \emph{equivalent} to $\Gamma_i$ (hence any other real form of $\e6$ of signature $-26$ will have it). Take into account that in Table \ref{tablafinasdee6} the notation  $\Gamma_i$ means certain equivalence class of fine gradings on  $\e6$, in which   a determined representative has not been  previously fixed. }
\end{re}

 As we are interested in fine gradings, note the following obvious result.
\begin{pr}\label{pr_inheritedFineGrading}
Let $\Gamma$ be a grading on  a real form  $L$ of a complex Lie algebra $S$. If $\Gamma^{\C}$ is a fine grading on $S$, then $\Gamma$ is a fine grading on $L$.
\end{pr}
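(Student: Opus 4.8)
The statement is essentially a compatibility between refinements and complexification, so the plan is to argue by contraposition: I would assume that $\Gamma$ is \emph{not} fine on $L$ and from this produce a proper refinement of $\Gamma^\C$ on $S$, contradicting the hypothesis that $\Gamma^\C$ is fine. Concretely, if $\Gamma\colon L=\oplus_{g\in G}L_g$ fails to be fine, then by the definition of fineness there is a proper refinement $\Gamma'\colon L=\oplus_{h\in H}L'_h$ of $\Gamma$; that is, for every $h\in H$ there is some $g\in G$ with $L'_h\subset L_g$, and at least one of these inclusions is strict.

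The first step is to check that complexification carries this refinement over to $S$. Since the bracket of $S=L^\C$ is the $\C$-bilinear extension of the bracket of $L$, the decomposition $(\Gamma')^\C\colon S=\oplus_{h\in H}(L'_h)^\C$ is again a grading (it is precisely the complexified grading of the preliminaries). From $L'_h\subset L_g$ one immediately gets $(L'_h)^\C\subset (L_g)^\C$, so $(\Gamma')^\C$ is a refinement of $\Gamma^\C\colon S=\oplus_{g\in G}(L_g)^\C$.

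It then remains to see that this refinement is proper, and this is the only point that needs a (trivial) argument: complexification neither merges homogeneous components nor loses dimension. Indeed $\dim_\C(L'_h)^\C=\dim_\R L'_h$ and $\dim_\C(L_g)^\C=\dim_\R L_g$, so a strict inclusion $L'_h\subsetneq L_g$ forces $(L'_h)^\C\subsetneq (L_g)^\C$. Hence $(\Gamma')^\C$ is a \emph{proper} refinement of $\Gamma^\C$, contradicting the fineness of $\Gamma^\C$ and completing the argument.

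I do not expect any genuine obstacle; the whole content is the observation that the inclusion $L'_h\subset L_g$ together with the dimension equality $\dim_\C V^\C=\dim_\R V$ preserves strictness under complexification. The only thing to be mildly careful about is that a refinement may live over a group $H$ different from $G$, but the definitions of refinement and fineness in the preliminaries already allow this, so no adjustment is needed.
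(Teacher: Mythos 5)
Your proof is correct and matches the paper's intent: the paper states this proposition without proof, introducing it as an ``obvious result,'' and the contrapositive argument you give --- complexifying a proper refinement of $\Gamma$ to obtain a proper refinement of $\Gamma^{\C}$, with properness preserved because $\dim_{\C}V^{\C}=\dim_{\R}V$ --- is precisely the natural way to make that remark rigorous. There are no gaps.
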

Unfortunately, the converse result does not seem to be true: there could exist a fine  grading $\Gamma$ on $L$  such that $\Gamma^{\C}$ is not fine, although, as far as we know, none example has appeared.

Some different approaches have been used to study gradings on real forms. Most of them appear in the review paper \cite{review}. For instance, the so-called \emph{fundamental method}, used in \cite[Theorem~4]{review}, can be stated as follows: 

\begin{pr}\label{pr_metodobasereal}
Let $\Gamma: S = \oplus_{g\in G} S_g$ be a grading on a complex Lie algebra $S$ and let $\si$ be a conjugation. If there is a basis of $S$ formed by homogeneous elements, all of them $\si$-invariant, then the real form $S^\si$ inherits the grading $\Gamma$.
\end{pr}

 In general,  finding this basis is a problem as difficult as the original one (in fact, the converse of Proposition~\ref{pr_metodobasereal} is also true, if a real form $L$ of $S$ inherits $\Gamma$, then there exists such a basis). A new viewpoint appears in \cite[Proposition~3]{reales}:
 
 \begin{pr}\label{pr_nuestrometodo}
 Let $\si$ be a conjugation of $S$ such that $S^\si$ inherits certain fine grading $\Gamma: S = \oplus_{g\in G} S_g$.
 For any other conjugation $\mu$ of $S$, the real form $S^\mu$ inherits $\Gamma$ if and only if $\mu\si^{-1}$ belongs to the (maximal) quasitorus of $\aut(S)$ producing the grading. 
 \end{pr}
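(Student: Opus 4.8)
The plan is to reduce the statement to a purely group-theoretic fact about the maximal quasitorus $Q\le\aut(S)$ producing $\Gamma$, for which I write the homogeneous components as the joint eigenspaces $S_g=\{x\in S\mid \psi(q)(x)=g(q)x\ \forall q\in Q\}$. The first step is an elementary \emph{reduction lemma}: for any conjugation $\rho$ of $S$, the real form $S^\rho$ inherits $\Gamma$ if and only if $\rho(S_g)=S_g$ for every $g\in\supp\Gamma$. The nontrivial implication follows from a dimension count: if $S^\rho=\oplus_g(S^\rho\cap S_g)$, then, since $\dim_\R S^\rho=\dim_\C S=\sum_g\dim_\C S_g$ while $\dim_\R(S^\rho\cap S_g)\le\dim_\C S_g$, each $S^\rho\cap S_g$ must be a real form of $S_g$; as $\rho$ is conjugate-linear and fixes a $\C$-spanning subset of $S_g$, it preserves $S_g$. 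The converse is immediate, because then $\rho|_{S_g}$ is a conjugation of $S_g$ and hence splits each component.

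Granting the lemma, set $\phi:=\mu\si^{-1}=\mu\si\in\aut(S)$, a $\C$-linear automorphism since it is the composite of two conjugations. For the \emph{if} direction, assume $\phi\in Q$; then $\phi$ acts as a scalar on each $S_g$, so $\phi(S_g)=S_g$, while $\si(S_g)=S_g$ by hypothesis and the lemma. Hence $\mu(S_g)=\phi(\si(S_g))=S_g$ for all $g$, and the lemma yields that $S^\mu$ inherits $\Gamma$. For the \emph{only if} direction, assume $S^\mu$ inherits $\Gamma$; the lemma gives $\mu(S_g)=S_g$, and combined with $\si(S_g)=S_g$ this shows $\phi(S_g)=S_g$ for every $g$.

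It then remains to prove the crux, namely that an automorphism $\phi$ preserving every homogeneous component of a \emph{fine} grading must lie in $Q$. Observe first that $\phi(S_g)=S_g$ for all $g$ is equivalent to $\phi$ commuting with all of $Q$, since the $S_g$ are exactly the joint eigenspaces of $Q$; thus $\phi\in C_{\aut(S)}(Q)$. As $\aut(S)$ is reductive and $Q$ is diagonalizable, $C_{\aut(S)}(Q)$ is reductive and contains $Q$ in its center. I would then invoke the Jordan decomposition $\phi=\phi_s\phi_u$: the semisimple part $\phi_s$, being a polynomial in $\phi$, still centralizes $Q$, so $\langle Q,\phi_s\rangle$ is a quasitorus containing $Q$, whence $\phi_s\in Q$ by the \emph{maximality} of $Q$. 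Finally, any maximal torus of $C_{\aut(S)}(Q)^\circ$ centralizes the central subgroup $Q$, so it lies in $Q$ and therefore equals $Q^\circ$; a connected reductive group whose maximal torus is central coincides with that torus, so $C_{\aut(S)}(Q)$ harbours no nontrivial unipotent element and $\phi_u=\id$, giving $\phi=\phi_s\in Q$. The main obstacle is precisely this last point, the self-centrality $C_{\aut(S)}(Q)=Q$ of a maximal quasitorus (MAD-group); once the duality between the fine grading and $Q$ is in place, the reduction lemma and the bookkeeping of the two implications are routine.
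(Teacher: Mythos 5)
Your proof is correct. Note that the paper itself offers no proof of this proposition --- it is quoted verbatim from \cite[Proposition~3]{reales} --- so there is no internal argument to compare against; your write-up supplies the standard one. The reduction lemma (inheriting $\Gamma$ is equivalent to $\rho(S_g)=S_g$ for all $g$, via the dimension count $\dim_\R(S^\rho\cap S_g)\le\dim_\C S_g$), the bookkeeping with $\phi=\mu\si^{-1}=\mu\si$, and the translation of ``$\phi$ preserves every homogeneous component'' into ``$\phi$ centralizes $Q$'' are all sound, and the crux is indeed the self-centrality $C_{\aut(S)}(Q)=Q$ of the MAD-group. Your Jordan-decomposition treatment of that crux works: the semisimple part lands in $Q$ by maximality, exactly as it should. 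The only place where you lean on machinery heavier than necessary is the unipotent part, where you invoke reductivity of $C_{\aut(S)}(Q)^{\circ}$ (the centralizer of a diagonalizable subgroup of a reductive group); this is true in characteristic zero but is the least self-contained step of your argument. A lighter route, using only a fact the paper already exploits in the proof of Proposition~\ref{lemaNo} (namely that $S_e$ is a toral subalgebra for a fine grading, \cite[Corollary~5]{criscanf4}), is: if $\phi_u$ is unipotent and preserves every $S_g$, then $\log\phi_u$ is a nilpotent derivation of degree $e$, hence equals $\ad(n)$ for some $n\in S_e$ because $\ad\colon S\to\der(S)$ is a degree-preserving isomorphism; since $S_e$ is toral, $\ad(n)$ is semisimple, forcing $\log\phi_u=0$ and $\phi_u=\id$. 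Either way the conclusion $\phi=\phi_s\in Q$ stands, and both implications of the proposition follow as you state.
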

This   is used  in \cite{reales} in order to give a complete list up to equivalence of the fine gradings on the real forms of $\g2$ and $\f4$: for any $Q$ maximal quasitorus of $\aut(S)$, being $S\in\{\g2,\f4\}$, the problem consists of classifying the signatures of the Killing forms of $S^{\si q}$ for all those $q\in Q$ with $(\si q)^2=\id.$
This problem is equivalent to that one of computing the dimensions  of the subalgebras  fixed by some automorphism in the class  $\Phi([\si q])$ (see Equation~(\ref{eq_defdeFI})), which is not usually an easy task
 since one first has to find the 
automorphisms $\theta_{\si q}$.  Both methods of propositions \ref{pr_metodobasereal} and \ref{pr_nuestrometodo} will be used through  this paper.


\section{Fine gradings on $\sig{26}$ }\label{sec_gradings}

\subsection{A $ \Z_2^6$-grading and a $\Z \times\Z_2^4$-grading.}

 Recall from Proposition~\ref{pr_constjacobson}   that $\L=\der\J\oplus \J_0$ has signature $-26$, being $\J=\mathcal{H}_3(\OO,\diag\{1,-1,1\})$. All the $G$-gradings on $\J$ induce naturally $G$-gradings on $ \der(\J)$ and $G\times\Z_2$-gradings on $\L$. The Jordan algebra $\J$ is $\Z_2^5$-graded and $\Z\times\Z_2^3$-graded \cite[Corollary~1.3]{reales} (there, $\J$ is the nonsplit and noncompact real form   of the complex Albert algebra).
 Furthermore, the $ \Z_2^6$-grading and the $\Z \times\Z_2^4$-grading induced on $\L$ have as complexifications $\Gamma_7$ and $\Gamma_8$ respectively.  In particular, the real form $\sig{26}$ admits a fine $ \Z_2^6$-grading and a fine $\Z \times\Z_2^4$-grading.
 
 For completeness we next recall descriptions of these gradings on $\J$. Denote a generic element in $\J$ by
 $$
 \left(\begin{array}{ccc}
 s_1&a_3&\bar a_2\\-\bar a_3&s_2&a_1\\a_2&-\bar a_1&s_3\end{array}\right)=:\sum_i s_iE_i+\sum_i\iota_i(a_i)
 $$
 if $s_i\in\R$ and $a_i\in\OO$.
 On one hand, the octonion algebra $\OO$ admits a $\Z_2^3$-grading:
 \begin{equation}\label{eq_gradenO}
 \begin{array}{lll}
 \OO_{(\bar0,\bar0,\bar0)}=\R 1, & \quad& \OO_{(\bar0,\bar0,\bar1)}=\R\textbf{l},\\
 \OO_{(\bar1,\bar0,\bar0)}=\R \textbf{i}, &&  \OO_{(\bar1,\bar0,\bar1)}=\R\textbf{il},\\
 \OO_{(\bar0,\bar1,\bar0)}=\R \textbf{j}, &&  \OO_{(\bar0,\bar1,\bar1)}=\R\textbf{jl},\\
 \OO_{(\bar1,\bar1,\bar0)}=\R \textbf{k}, &&  \OO_{(\bar1,\bar1,\bar1)}=\R\textbf{kl},
 \end{array}
\end{equation}
which induces the $\Z_2^3$-grading on the Jordan algebra $\J$ given by
\begin{equation}\label{eq_gradingdeoctoniones}
\begin{array}{l}
\J_{(\bar0,\bar0,\bar0)}=\sum_i\R E_i\oplus\sum_i\R\iota_i(1),\\
\J_g=\sum_i\iota_i(\OO_g) \quad \textrm{ if }e\ne g\in\Z_2^3.
\end{array}
\end{equation}
On the other hand, $\J$ is $\Z_2^2$-graded  as
$$
\begin{array}{ll}
\J_{(\bar0,\bar0)}=\sum_i\R E_i, \quad &\J_{(\bar0,\bar1)}=\iota_1(\OO),\\
\J_{(\bar1,\bar0)}=\iota_2(\OO),&\J_{(\bar1,\bar1)}=\iota_3(\OO),
\end{array}
$$
and also $\Z$-graded as
$$
\begin{array}{l}{\J }_{-2}=\R({ E_2-E_3-\iota_1(1)}),\\
\J_{-1}=\{{ \iota_2(x)-\iota_3(\overline{x})\mid
x\in \OO}\},\\
{\J}_{0}= \span{\{E_1, E_2+E_3,
\iota_1(x)\mid x\in \OO_0\}},\\
{\J}_{1}=\{{
\iota_2(x)+\iota_3(\overline{x})\mid x\in \OO}\},\\
{\J}_{2}=\R({  E_2-E_3+\iota_1(1)}),
\end{array}
$$ 
which is precisely the eigenspace decomposition relative to the endomorphism
$4[R_{\iota_1(1)},R_{E_2}]\in\der(\J)$,
which has integer eigenvalues.  
The last two gradings are compatible with that one in Equation~(\ref{eq_gradingdeoctoniones}), so that they can be combined to produce the desired gradings on $\J$.

\subsection{A $\Z^2\times\Z_2^3$-grading.}

Gradings on the  two ingredients involved in Tits' construction    can be used to get some interesting gradings on the resulting Lie  algebras (in fact, this was what we did in the above subsection, by taking the natural $\Z_2$-grading on $\R\oplus\R$).
Namely, if $\CC=\oplus_{g\in G}\CC_g$ and $J=\oplus_{h\in H}J_h$ are $G$ and $H$-gradings on the composition algebra $\CC$ and on the Jordan algebra $J$ respectively, then $\der(\CC)$ and $\der( J)$ are also $G$ and $H$-graded (respectively), and thus $L=\T(\CC,J)=\oplus_{(g,h)\in G\times H}L_{(g,h)}$ is $G\times H$-graded, for
\begin{equation}\label{eq_mixinggradings}
\begin{array}{l}
L_{(e,e)}= \der(\CC)_e \oplus\der (J)_e \oplus (\CC_0)_e\otimes (J_0)_e,\\
L_{(e,h)}= \der (J)_h \oplus (\CC_0)_e\otimes (J_0)_h,\\
L_{(g,e)}=\der(\CC)_g \oplus (\CC_0)_g\otimes (J_0)_e,\\
L_{(g,h)}=(\CC_0)_g\otimes (J_0)_h, 
\end{array}
\end{equation}
if $e\ne g\in G$, $e\ne h\in H$ (note that $\CC_0$ and $J_0$ are necessarily graded subspaces of $\CC$ and $J$, respectively).

Now we consider a $\Z^2$-grading on $\M=\Mat_{3\times 3}(\R)^+$: for $g_1=(0,0), g_2=(1,0)$ and $g_3=(0,1)$, we state that the degree of 
   the unit matrix $E_{ij}$ is $g_j-g_i$, being $E_{ij}$ the matrix with the only nonzero entry, the   $(i,j)$\emph{th}, equal to $1$.
    In fact, this provides a grading on the associative matrix algebra $ \Mat_{3\times 3}(\R)$, in particular also with the symmetrized product. Then we  combine, following Equation~(\ref{eq_mixinggradings}), this $\Z^2$-grading on $\M$ with the $\Z_2^3$-grading on $\OO$ given by Equation~(\ref{eq_gradenO}), thus getting a 
  $\Z^2\times\Z_2^3$-grading on $\T(\OO,\M)$  whose complexification is $\Gamma_4$. But $\T(\OO,\M)$ has signature $-26$ by Proposition~\ref {pr_constTitsnuestra}.

\subsection{A $\Z_4\times\Z_2^4$-grading.} 

We begin by reviewing a concrete description of the fine  $\Z_4\times\Z_2^4$-grading $\Gamma_{11}$ on the complex algebra $S=\e6$. 
Consider the  matrices
$$
\sigma_1= \left(\begin{array}{cc}
0&1\\
1&0
\end{array}\right),\qquad
 C=\left(\begin{array}{cc}
0 & I_4 \\
-I_4 & 0 
\end{array}\right).
$$
Then the invertible matrices
$$
\begin{array}{l}
A_1=\ii\left(\begin{array}{cccc}
0 &0 &  I_2 & 0\\
0 & 0 & 0 & \s_1 \\
 I_2 & 0 & 0 & 0\\
0 & \s_1 & 0 & 0
\end{array}\right) ,\\
A_2=\ii\diag\{{I_4,-I_4}\} ,\\
A_3=\diag\{{\sigma_1,\sigma_1,\sigma_1,\sigma_1}\},\\
A_4=\diag\{{1,-1,-\ii,\ii,1,-1,\ii,-\ii}\} ,
\end{array}
$$
satisfy $A_iC A_i^t=C$, so that $\Ad A_i\in\aut(\mathfrak{sp}_\C(8,C))\cong \Sp_\C(8,C)$, where the complex Lie algebra 
$ S_0=\mathfrak{sp}_\C(8,C)=\{x\in\Mat_{8\times8}(\C)\mid xC+Cx^t=0\}$ is of type $\c4$. The group generated by $\{A_i\}_{i=1}^4$ is a MAD-group of $\Sp_\C(8,C)$ which produces a simultaneous diagonalization of $S_0$ of type $(24,6)$. It is not difficult to check (by doing the simultaneous diagonalization) that there is $B_0$ a basis of $S_0$ formed by simultaneous eigenvectors such that each of them is a matrix with entries in the set $\{1,0,-1\}$. As $B_0\subset  \mathfrak{sp}_\R(8,C)=\{x\in\Mat_{8\times8}(\R)\mid xC+Cx^t=0\}$, then the real vector space $L_0$ spanned by $B_0$ coincides with $\mathfrak{sp}_\R(8,C)$. In particular $L_0$ is a split real form of $\c4$ (Remark~\ref{rem_compactac4}) and deserves the notation $\mathfrak{c}_{4,4}$. Obviously $L_0$ inherits the $ \Z_4 \times\Z_2^3$-grading on $S_0$ by Proposition~\ref{pr_metodobasereal}.

Recall \cite{e6} that $S$ can be constructed from $S_0$ as $S=S_0\oplus\ker c$, 
being $c$ the contraction
$$
\begin{array}{llcl}
c\colon  &\Lambda^4\C^8& \longrightarrow &\Lambda^2\C^8\\
&v_1\wedge v_2\wedge v_3 \wedge v_4  &\longmapsto&\sum_{\tiny
{\begin{array}{l}\sigma\in S_4\\
\sigma(1)<\sigma(2)\\
\sigma(3)<\sigma(4)\end{array}}}
(-1)^\sigma  (v_{\sigma(1)}^tCv_{\sigma(2)})
v_{\sigma(3)}\wedge v_{\sigma(4)},
\end{array}
$$
which is of course a homomorphism of $S_0$-modules. 
Moreover, if $A\in\Sp_\C(8,C)$, the map $A^\bullet\colon S\to S$ given by $\Ad A$ in $S_0$ ($X\mapsto AXA^{-1}$) and sending $v_1\wedge v_2\wedge v_3 \wedge v_4$ to $Av_1\wedge Av_2\wedge Av_3 \wedge Av_4$ is (well-defined and) an automorphism of the complex algebra $S$. If $\theta$ denotes the outer order two automorphism of  $S=S_0\oplus\ker c$   providing the $\Z_2$-grading (with odd part $\ker c $), then   $\{A^\bullet\mid A\in\Sp_\C(8,C)\}\times\langle \theta\rangle$ coincides with the centralizer of $\theta$ in the group $\aut (S)$, so that the group generated by $ \{A_i^\bullet, \theta\mid i=1,\dots4\}$ is a    
finite MAD-group of $\aut (S)$, isomorphic as abstract group to
$\Z_4 \times\Z_2^4$. Again, when we realize the simultaneous diagonalization, there is a homogeneous basis $B_1$ of $\ker c$ formed by linear combinations of certain elements in $\{v_{i_1}\wedge v_{i_2}\wedge v_{i_3} \wedge v_{i_4} \mid 1\le i_1<i_2<i_3<i_4\le8\}$ (this set is a $\C$-basis of $\Lambda^4\C^8 $,  where  $v_i$ denotes the column vector of $\C^8$ with the only nonzero entry being the i\emph{th} one, equal to 1), all these linear combinations with coefficients $\pm1$. 
(This basis $B_1$ is exhibited in \cite{tesis}, where   long  computations by  hand  were made.) 
That is, $B_1$ lives in $\ker c\cap \Lambda^4\R^8 $, which  allows to assure that $L$, the real vector space spanned by $B_0\cup B_1$,
 not only satisfies $[L,L]\subset S$ but $[L,L]\subset L$, that is, $L$ is a real form of $S=\e6$ inheriting $\Gamma_{11}$.  
To summarize, we have applied Proposition~\ref{pr_metodobasereal} to prove   item a) in:

\begin{pr}\label{hereda11}
Let $\Gamma_{11}$ be the fine $\Z_2^4\times\Z_4$-grading on $\e6$ given as in \cite{e6}. 
\begin{itemize}
\item[a)]
There is   $L=L_0\oplus L_1$ a $\Z_2$-graded real form of  $\e6$ inheriting $\Gamma_{11}$ and such that $L_0\cong\mathfrak{c}_{4,4}$.
\item[b)]
There is   $L'=L_0'\oplus L_1'$ a $\Z_2$-graded real form of  $\e6$ inheriting $\Gamma_{11}$ and such that $L'_0\cong\mathfrak{c}_{4,-12}$.
\item[c)] The real form $\sig{26}$ inherits $\Gamma_{11}$.
\end{itemize}
\end{pr}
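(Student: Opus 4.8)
The plan is to prove the three items of Proposition~\ref{hereda11} by building on the explicit MAD-group picture already set up for $\Gamma_{11}$. Item a) is essentially done: the text has exhibited $B_0\cup B_1$, a homogeneous basis living in $\mathfrak{sp}_\R(8,C)\oplus(\ker c\cap\Lambda^4\R^8)$, so Proposition~\ref{pr_metodobasereal} (the fundamental method) produces a real form $L=L_0\oplus L_1$ inheriting $\Gamma_{11}$ with $L_0=\mathfrak{c}_{4,4}$ the split form. The remaining work is to obtain a \emph{second} real form whose even part is $\mathfrak{c}_{4,-12}=\sp31$, and then to identify that second form as $\sig{26}$.

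For item b) the idea is to perturb the conjugation defining $L$ by an element of the quasitorus producing $\Gamma_{11}$, invoking Proposition~\ref{pr_nuestrometodo}. Let $\si$ be the conjugation of $S=\e6$ with $S^\si=L$, and let $Q=\langle A_i^\bullet,\theta\mid i=1,\dots,4\rangle$ be the MAD-group. For each $q\in Q$ with $(\si q)^2=\id$, the real form $S^{\si q}$ again inherits $\Gamma_{11}$, and its even part (the fixed subspace of the inherited $\Z_2$-grading on $S_0=\c4$) is a real form of $\c4$. Since $\c4$ has exactly three real forms of signatures $4$, $-12$, $-24$, and we already have the split one, I would search among the few candidates $q$ for one realizing signature $-12$ on the degree-zero part. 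Concretely, the even part is $\mathfrak{sp}_\R(8,C)$ twisted by $q$, and I would compute the signature of its Killing form by the recipe of Equation~(\ref{eq_signapartirparte fija}): it equals $36-2\dim\fix(\theta_{\si q}\vert_{S_0})$, which reduces to a dimension count of a simultaneous fixed space inside the already-diagonalized $S_0$. Choosing $q$ so that this count gives $24$ yields signature $-12$, hence $L_0'\cong\sp31$ by the uniqueness of the real form of $\c4$ with that signature.

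Item c) then follows by combining item b) with Proposition~\ref{pr_lasquecontienensp31}. Indeed, $L'=L_0'\oplus L_1'$ is a $\Z_2$-graded real form of $\e6$ with $L_0'\cong\sp31$, and $L_1'$ is an $\sp31$-module; since the complexification decomposes $\e6=\c4\oplus V(\lambda_4)$ uniquely, $L_1'$ is $\sp31$-irreducible of the required type. Proposition~\ref{pr_lasquecontienensp31} then forces the signature of $L'$ to be either $-26$ or $2$. To pin it down to $-26$ I would compute the signature of $L'$ directly, again via Equation~(\ref{eq_signapartirparte fija}) applied to the outer involution governing the $\Z_2$-grading $S=S_0\oplus\ker c$: the full signature is $78-2\dim\fix(\theta_{\si'})$, and the fixed subalgebra is the intersection of $\fix$ on $S_0$ with the corresponding fixed part of $\ker c$. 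Since $\dim\ker c=27$ and the even part $L_0'$ already has signature $-12$, the odd contribution must tip the total, and a short count should give exactly $52$ for $\dim\fix(\theta_{\si'})$, i.e.\ signature $-26$.

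The main obstacle I expect is item b): one must locate the precise $q\in Q$ producing the nonsplit form $\mathfrak{c}_{4,-12}$ and verify the fixed-dimension count is $24$ rather than $12$ or $18$. This is delicate because it requires tracking how $q$ acts on the explicit eigenvector basis $B_0$ of $S_0$, and because the passage from a quasitorus element to its associated order-two automorphism $\theta_{\si q}$ (needed for Equation~(\ref{eq_signapartirparte fija})) is, as the paper itself warns, not automatic. Once the correct $q$ is identified, items a) and c) are comparatively routine applications of the two propositions already at hand.
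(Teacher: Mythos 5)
Your treatment of items a) and b) follows the paper's route. Item a) is already secured by the homogeneous basis $B_0\cup B_1$ together with Proposition~\ref{pr_metodobasereal}, and for b) the paper makes your ``search for the right $q$'' concrete by taking $\sigma'=\sigma A_1^\bullet A_2^\bullet A_3^\bullet$; the difficulty you correctly anticipate (producing $\theta_{\sigma_0 q}$) is resolved there by noting that $\sigma_0\Ad C$ is the compact conjugation of $S_0$ (Remark~\ref{rem_compactac4}) and that $\Ad C$ commutes with $\Ad A_1,\Ad A_2,\Ad A_3$, so that $\Phi([\sigma_0'])=[\Ad(CA_1A_2A_3)]$ and one only has to check that this matrix automorphism fixes a $24$-dimensional subalgebra, giving signature $-12$.

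The genuine gap is in item c). You propose to settle the $\{-26,\,2\}$ dichotomy coming from Proposition~\ref{pr_lasquecontienensp31} by computing $\dim\fix(\theta_{\sigma'})$ directly and asserting that ``a short count should give exactly $52$.'' You offer no argument that the count is $52$ rather than $38$ (the value corresponding to signature $2$), and this is exactly the computation the paper warns against: one must first exhibit an order-two $\theta_{\sigma'}$ commuting with $\sigma'$ such that $\theta_{\sigma'}\sigma'$ is compact, and then track its action on $\ker c$, which is $42$-dimensional ($\dim\Lambda^4\C^8-\dim\Lambda^2\C^8=70-28=42$), not $27$-dimensional as you write. The paper avoids the whole issue with one observation you are missing: the homogeneous basis of $L'$ is also a homogeneous basis of $L'^t$ for every $t\in\R$, so $L'^{-1}$ inherits $\Gamma_{11}$ as well; since Proposition~\ref{pr_lasquecontienensp31} guarantees that one of $L'$, $L'^{-1}$ has signature $-26$, some real form of signature $-26$ inherits $\Gamma_{11}$, and it is never necessary to decide which of the two it is. You should either supply the missing fixed-subalgebra computation in full or, much more economically, add this remark about $L'^{-1}$.
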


\begin{proof}
Let $\sigma$ be the conjugation of $S=\e6$ fixing $L$, and let $\sigma_0$ be the conjugation of $S_0=L_0^\C=\mathfrak{sp}_\C(8,C)$ fixing $L_0$. According to
Proposition~\ref{pr_nuestrometodo}, also $\sigma'=\sigma A_1^\bullet A_2^\bullet A_3^\bullet$ is a conjugation of $S$ whose related real form $L'=S^{\sigma'}$ inherits $\Gamma_{11}$. The algebra $L'$ is $\Z_2$-graded, being its even part $L'_0=\{x\in S_0\mid \sigma_0'(x)=x\}$ for the conjugation $\sigma_0'=\sigma_0\Ad (A_1  A_2  A_3)$.  
In order to compute the signature of the Killling form of $L'_0$, which is a real form of $\c4$, we apply the map $\Phi$  in Equation~(\ref{eq_defdeFI}). 
Note that $\sigma_0\Ad C$ is a compact conjugation of $S_0$  by Remark~\ref{rem_compactac4}. Besides, $\Ad C\in\Sp_\C(8,C)$ commutes with $\Ad A_i$  for all $i=1,2,3$, 
so that $\Phi([\sigma_0'])=[\Ad (C  A_1  A_2 A_3)]$. The subalgebra fixed by this order two automorphism has dimension 24 (an easy  matrix computation), so the signature of $k_{L_0'}$ is $36-2\cdot 24=-12$ by Equation~(\ref{eq_signapartirparte fija}), and item b) follows.

By Proposition~\ref{pr_lasquecontienensp31}, either this algebra $L'$   has signature $-26$ or $L'^{-1}$ has signature $-26$. But $L'^{-1}$ also inherits the grading $\Gamma_{11}$, since the homogeneous basis of $L'$  is also a homogeneous basis of $L'^t$ for any $t\in\R$. In this way, there is a real form of $\e6$ of signature $-26$ (and another one of signature 2) inheriting $\Gamma_{11}$.
\end{proof}

\begin{re}\label{rem_compactac4}
{\rm
Recall that $ \mathfrak{sp}_\R(8,C)=\{x\in\Mat_{8\times8}(\R)\mid xC+Cx^t=0\}$ is a split real form of $\c4$ since 
$\h=\sum_{i=1}^4\R h_i$ for $h_i=E_{ii}-E_{i+4,i+4}$ diagonalizes $ \mathfrak{sp}_\R(8,C)$ with real eigenvalues. The set $\{\alpha_i\colon\h\to\R\mid i=1,\dots4\}$ is a basis of the root system relative to $\h$, if $\alpha_1(h)=w_1-w_2$, $\alpha_2(h)=w_2-w_3$, $\alpha_3(h)=w_3-w_4$ and $\alpha_4(h)=2w_4$, for $h=\sum_i w_ih_i$.
A basis as $\mathcal B$   in Section~\ref{sec_prelimirealforms} can be obtained by taking
$$\begin{array}{ll}
e_{ 1}=E_{12}-E_{65}, \qquad&f_{1}=E_{21}-E_{56},  \\
e_{ 2}=E_{23}-E_{76}, &f_{2}=E_{32}-E_{67},  \\
e_{ 3}=E_{34}-E_{87}, &f_{3}=E_{43}-E_{78},  \\
e_{4}=E_{48}, &f_{4}=E_{84} .
\end{array}
$$
If we take into account that $\Ad( C )$ swaps $E_{i,j}$ with $E_{i+4,j+4} $ and $E_{i,j+4}$ with $-E_{i+4,j} $   for all $i,j=1,\dots,4$, then $\Ad( C )(e_j)=-f_j  $  and $\Ad( C )(f_j)=-e_j  $, so that $\sigma_0 \Ad( C )$ is compact ($\Ad( C )$ is the automorphism $\omega$ in Section~\ref{sec_prelimirealforms}).
}
\end{re}

\subsection{Not more fine gradings from $\e6$.}

Our aim now is to prove that there are not gradings on $\sig{26}$ whose complexified grading is equivalent to $\Gamma_i$ in Table~\ref{tablafinasdee6} for some $i\ne 4,7,8,11$.
Note   first   the following trivial linear algebra result (\cite[Lemma~1]{Heis}, particularized to the real field).

\begin{lm} \label{baseadaptada} Let $(V,\esc{\cdot,\cdot})$ be a finite-dimensional real vector space $V$ with a symmetric nondegenerate bilinear form $\esc{\cdot,\cdot}\colon V\times V\to\mathbb{R}$.
Assume
that $V=\oplus_{i\in I}V_i$ is the direct sum of   subspaces
in such a way that  for each $i\in I$ there is a unique $j\in I$
such that $\esc{V_i,V_j}\ne 0$. Then  there is a basis
$B=\{u_1,v_1,\ldots, u_r,v_r,z_1,\ldots ,z_q\}$ of $V$ such that
\begin{itemize}
\item $B\subset\cup_i V_i$;
\item $\esc{z_i,z_i}\in\{\pm1\}$, $\esc{u_i,v_i}=1$;
\item Any other inner product of elements in $B$ is zero.
\end{itemize}
\end{lm}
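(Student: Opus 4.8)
The plan is to use the hypothesis to split $V$ into mutually orthogonal blocks on which the form is nondegenerate, and then put each block into normal form separately. First I would observe that the hypothesis defines an involution on the index set $I$: for each $i$ let $p(i)\in I$ be the unique index with $\esc{V_i,V_{p(i)}}\ne 0$. Since $\esc{\cdot,\cdot}$ is symmetric, $\esc{V_{p(i)},V_i}\ne 0$, and uniqueness of the partner then forces $p(p(i))=i$, so $p$ is an involution. Hence $I$ decomposes into fixed points ($p(i)=i$) and two-element orbits $\{i,p(i)\}$ with $p(i)\ne i$. Grouping the summands accordingly, $V$ becomes a direct sum of blocks, each of the form $V_i$ (fixed point) or $V_i\oplus V_{p(i)}$ (two-element orbit), and these blocks are pairwise orthogonal because $\esc{V_i,V_k}=0$ whenever $i$ and $k$ lie in different blocks, by the unique-partner hypothesis.

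Next I would check that $\esc{\cdot,\cdot}$ restricts nondegenerately to each block $W$. This is immediate: the blocks are pairwise orthogonal and the global form is nondegenerate, so any vector of $W$ lying in the radical of $W$ would be orthogonal to all of $V$ and hence zero. This reduces the problem to producing the required basis inside each block.

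On a fixed-point block $W=V_i$ the restricted form is a nondegenerate symmetric bilinear form on a real vector space, so Sylvester's law of inertia (equivalently, Gram--Schmidt orthogonalization followed by rescaling) yields an orthogonal basis $\{z_1,\dots,z_q\}$ of $V_i$ with $\esc{z_s,z_s}\in\{\pm1\}$ and $\esc{z_s,z_t}=0$ for $s\ne t$; these give the $z$-vectors. On a two-element orbit block $W=V_i\oplus V_{p(i)}$ with $p(i)\ne i$, the diagonal restrictions to $V_i$ and to $V_{p(i)}$ both vanish (each index's only partner is the other), so $V_i$ and $V_{p(i)}$ are totally isotropic and the form collapses to the pairing $V_i\times V_{p(i)}\to\R$. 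This pairing is nondegenerate, for if $u\in V_i$ were orthogonal to all of $V_{p(i)}$ it would be orthogonal to every block and hence to $V$, forcing $u=0$ (and symmetrically on the other side). Consequently $\dim V_i=\dim V_{p(i)}=:r$, and I can choose dual bases $\{u_1,\dots,u_r\}\subset V_i$ and $\{v_1,\dots,v_r\}\subset V_{p(i)}$ with $\esc{u_a,v_b}=\delta_{ab}$, which are the hyperbolic pairs.

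Finally I would take $B$ to be the union, over all blocks, of the bases just constructed (concatenating the indices $r$ and $q$ over the various blocks). By construction $B\subset\cup_i V_i$, it realizes $\esc{z_s,z_s}\in\{\pm1\}$ and $\esc{u_a,v_a}=1$, and every remaining pairwise product vanishes: within a block by the explicit choices, and across blocks by orthogonality. The only genuinely non-formal point is the nondegeneracy bookkeeping on each block, which yields in particular the dimension equality $\dim V_i=\dim V_{p(i)}$ in the off-diagonal case; everything else is a direct invocation of the standard normal forms for symmetric bilinear forms and for nondegenerate dual pairings.
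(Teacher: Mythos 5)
Your proof is correct and complete: the involution $p$ on the index set, the orthogonal block decomposition into fixed-point blocks and hyperbolic two-element-orbit blocks, the nondegeneracy of the form on each block, and the normal forms (Sylvester on the diagonal blocks, dual bases on the isotropic pairs) assemble exactly the basis claimed. The paper itself gives no proof of this lemma --- it is quoted from [CalDrMS, Lemma~1] --- and your argument is precisely the standard one for that result, so there is nothing to add.
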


\begin{re}\label{re_ortogonales}
{\rm
If $\Gamma:L=\oplus_{g\in G}L_g$ is a $G$-grading on a   real Lie algebra and $k\colon L\times L\to \mathbb{R}$ is the Killing form of $L$, then $k(L_g,L_h)=0$ if $g+h\ne e$,
because for such $g$ and $h$,  we have $(\ad L_g\ad L_h(L_k))\cap L_k=0$ for all $k$.
If besides  $L$ is semisimple, then $k$ is nondegenerate, so that $k(L_g,L_{-g})\ne0$ for all $g\in\supp(\Gamma)$.
}
\end{re}

\begin{pr}\label{lemaNo}
Let $S$ be a complex simple Lie algebra and $L$ a real form of $S$. Suppose that $L$ inherits    a fine grading on $S$ given by $\Gamma:S=\sum_{g\in G}S_g$. Then
$$
|\sign L - \dim  S_e | \leq \sum_{\stackrel{e\ne g\in G}{2g=e}}\dim  S_g.
$$
\end{pr}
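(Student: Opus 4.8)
The plan is to reduce everything to a signed count of homogeneous components using the orthogonality structure of the Killing form with respect to the grading. Since $L$ inherits the fine grading $\Gamma$, we have a real grading $L=\oplus_{g\in G}(L\cap S_g)$, and Proposition~\ref{pr_metodobasereal} (in its converse form, noted immediately after its statement) guarantees a homogeneous basis of $S$ consisting of $\si$-invariant elements, so that $\dim_\R(L\cap S_g)=\dim_\C S_g$ for every $g$. By Remark~\ref{re_ortogonales}, the Killing form $k$ of $L$ satisfies $k(L_g,L_h)=0$ unless $g+h=e$, and is nondegenerate because $L$ is semisimple. The key observation is that the index set $I=\supp\Gamma$ splits according to whether $2g=e$ or not: the components with $g\ne e$ and $2g\ne e$ pair off in dual pairs $\{g,-g\}$ with $g\ne -g$, while the component $S_e$ and the components with $2g=e$, $g\ne e$, are each self-paired under $g\mapsto -g$.

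First I would apply Lemma~\ref{baseadaptada} to $(L,k)$ with the decomposition indexed by $I$: the hypothesis that each $V_i$ has a unique partner $V_j$ with $k(V_i,V_j)\ne0$ holds precisely because $-g$ is the unique possible partner of $g$ and the pairing $k(L_g,L_{-g})$ is nondegenerate. The lemma then produces a basis $B=\{u_1,v_1,\dots,u_r,v_r,z_1,\dots,z_q\}$ adapted to the grading, with $\esc{u_i,v_i}=1$, $\esc{z_i,z_i}=\pm1$, all other products zero. Each hyperbolic pair $(u_i,v_i)$ contributes a $2\times2$ block of the form $\bigl(\begin{smallmatrix}0&1\\1&0\end{smallmatrix}\bigr)$, whose signature is $0$; these pairs come exactly from homogeneous components $L_g$ with $g\ne -g$, which contribute nothing to the signature. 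The diagonal vectors $z_i$, each contributing $\pm1$ to the signature, live inside the self-paired components, namely $L_e$ together with the $L_g$ for which $2g=e$, $g\ne e$.

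Next I would read off the signature as the signed count coming only from the self-paired part. Writing $W:=L_e\oplus\bigoplus_{2g=e,\,g\ne e}L_g$, the hyperbolic directions contribute zero and so $\sign L=\sign(k|_{W\times W})$. Now $|\sign(k|_{W\times W})|\le\dim_\R W=\dim S_e+\sum_{2g=e,\,g\ne e}\dim S_g$, simply because a signature is bounded in absolute value by the dimension. To get the sharper inequality with $\dim S_e$ singled out on the left, I would separate the contribution of $L_e$: since $\sign L$ differs from $\sign(k|_{L_e})$ only by the $z_i$ coming from the components with $2g=e$, $g\ne e$, and since $|\sign(k|_{L_e})|\le\dim L_e=\dim S_e$, one obtains $|\sign L-\sign(k|_{L_e})|\le\sum_{2g=e,\,g\ne e}\dim S_g$. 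The final step is to argue that $\sign(k|_{L_e})=\dim S_e$, i.e.\ that the restriction of the Killing form to the identity component is \emph{positive} definite — or at least to replace $\sign(k|_{L_e})$ by $\dim S_e$ in a way that keeps the stated bound valid.

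The main obstacle is exactly this last point: a priori $|\sign L-\dim S_e|$ could be as large as $|\sign L-\sign(k|_{L_e})|+|\sign(k|_{L_e})-\dim S_e|$, and the second term vanishes only if $k|_{L_e}$ is definite of the correct sign. I expect the cleanest route is to note that $L_e$ is the fixed subalgebra of the quasitorus action and, being semisimple (or reductive) and fixed pointwise by a compatible Cartan involution in the inherited structure, carries a definite restriction; alternatively one shows $L_e\subset L_{\bar0}$ for the Cartan decomposition of Section~\ref{sec_prelimirealforms}, on which $k$ is negative definite, forcing the sign. Whichever way, once $k|_{L_e}$ is shown to be definite the triangle-inequality bookkeeping above yields $|\sign L-\dim S_e|\le\sum_{2g=e,\,g\ne e}\dim S_g$ as claimed; this is the step I would write out most carefully, since the rest is the essentially formal signature computation furnished by Lemma~\ref{baseadaptada} and Remark~\ref{re_ortogonales}.
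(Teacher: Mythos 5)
Your reduction is the same as the paper's: Remark~\ref{re_ortogonales} plus Lemma~\ref{baseadaptada} give a homogeneous basis in which all hyperbolic pairs contribute zero, so $\sign L=\sign k\vert_{L_e}+\sign k\vert_{\sum_{2g=e,\,g\ne e}L_g}$, and the second summand is bounded in absolute value by $\sum_{2g=e,\,g\ne e}\dim S_g$. (A small inaccuracy: hyperbolic pairs need not come only from components with $g\ne-g$; a pair $u_i,v_i$ may lie inside a single self-paired component, as the paper notes, but this is harmless since such pairs still contribute $0$ to the signature.) Up to this point you match the paper's argument.

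The genuine gap is the step you explicitly defer: proving $\sign k\vert_{L_e}=+\dim S_e$, i.e.\ that $k\vert_{L_e}$ is \emph{positive} definite. This is not optional bookkeeping --- without the correct sign the inequality fails (for $\Gamma_5$ there are no $2$-torsion elements, so the proposition forces $\sign L=\dim S_e=6$ exactly; if $k\vert_{L_e}$ were negative definite one would instead conclude $\sign L=-6$). Moreover, both routes you sketch for closing it are wrong. First, $L_e$ is not semisimple: for a fine grading $S_e$ is an abelian toral subalgebra (by \cite[Corollary~5]{criscanf4}, cited in the paper). Second, if one had $L_e\subset L_{\bar0}$ for a Cartan decomposition, then $k\vert_{L_e}$ would be \emph{negative} definite, which is the wrong sign; the elements of $L_e$ are in fact hyperbolic, not compact. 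The paper's argument is: the torus $T$ inside the MAD-group producing $\Gamma$ fixes $S_e$ pointwise and induces a $\Z^l$-coarsening ($l=\dim_\C S_e$) inherited by $L$; writing $d_i=\ad(h_i)$ for the degree derivations, one gets $L_e=\span{\{h_1,\dots,h_l\}}$, and for $0\ne E=\sum_i s_ih_i$ the operator $\ad^2E$ acts on $L_{(n_1,\dots,n_l)}$ with eigenvalue $(\sum_is_in_i)^2$, so $k(E,E)=\sum_{(n_1,\dots,n_l)}(\sum_is_in_i)^2\dim L_{(n_1,\dots,n_l)}>0$ (strict because $\ad E\ne0$ by semisimplicity of $L$). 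That is the missing ingredient; the rest of your write-up then goes through.
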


\begin{proof}
The fact that $L$ inherits $\Gamma$ means that $L=\sum_{g\in G}L_g$ with $L_g\oplus \I L_g=S_g$.
By Remark~\ref{re_ortogonales}, we can apply   Lemma~\ref{baseadaptada} to $(L,k)$ for $k$ the Killing form, in order to find   an orthogonal basis for $k$ formed by homogeneous elements of the grading.
Let $B=\{u_1,v_1,\ldots, u_r,v_r,z_1,\ldots ,z_q\}$ be such a basis  ($k({z_j,z_j})=\pm1$, $k({u_i,v_i})=1$). 
Observe that the signature of $k$ coincides with   the signature of the restriction $k\vert_{\sum_{j=1}^q \mathbb{R}z_i}$,
since $k\vert_{<\{u_i,v_i\}>}=  \left(\begin{array}{cc}0&1\\1&0\end{array}\right)$ has zero signature.
Furthermore, the signature of $k$ coincides with the one of the restriction $k\vert_{\sum_{2g=e}L_g}$,
since, taking into account Remark~\ref{re_ortogonales}, $2\deg(z_j)=e$, and for those indices $i$ with $2\deg(u_i)=e$ then $\deg(u_i)=\deg(v_i)$.
Hence
$$
\sign k=\sign k\vert_{\sum_{2g=e}L_g}= \sign k\vert_{L_e} +\sign k\vert_{\sum_{2g=e,g\ne e}L_g}.
$$

Let us first check  that $k\vert_{L_e}$ is positive definite, so that $\sign k\vert_{L_e}=\dim_\mathbb{R} L_e=\dim_\mathbb{C} S_e$. Indeed, we know that $S_e$ is a toral subalgebra formed by semisimple elements (see, for instance, \cite[Corollary~5]{criscanf4}, where the hypothesis of $\Gamma$ being fine is essential). Moreover, if the MAD-group inducing the grading is the direct product of an $l(=\dim_\C S_e)$-dimensional torus $T$ with a finite group, then $T$ is just the torus formed by the automorphisms of $S$ fixing pointwise the subalgebra $S_e$. In particular $S$ is $\mathbb{Z}^l$-graded (a coarsening of $\Gamma$) and $L$ inherits this $\mathbb{Z}^l$-grading. If $L=\sum_{(n_1,\dots,n_l)\in\mathbb{Z}^l}L_{(n_1,\dots,n_l)}$ is such a grading, and $d_i\in\der(L)$ is the derivation defined by $d_i\vert_{L_{(n_1,\dots,n_l)}}=n_i\id$, take $h_i\in L$ such that $d_i=\ad(h_i)$ and note that $  L_e=\span{\{h_1,\dots,h_l\}}$. Now, any $0\ne E=\sum_{i=1}^l s_ih_i\in L_e$ satisfies that $k(E,E)=\sum_{(n_1,\dots,n_l)\in\mathbb{Z}^l}(\sum_i s_in_i)^2\dim L_{(n_1,\dots,n_l)}>0$, because
$\ad^2E$ acts in $L_{(n_1,\dots,n_l)} $ with eigenvalue $ (\sum_i s_in_i)^2$.

With all this in mind, it is clear that 
$$\begin{array}{rl}
|\sign L - \dim_\mathbb{C} S_e |=&|{\sign k\vert_{\sum_{2g=e,g\ne e} L_g}}|\\
\le&\dim_\mathbb{R}\sum_{\stackrel{2g=e}{g\ne e}}L_g=\dim_\mathbb{C}\sum_{\stackrel{2g=e}{g\ne e}}S_g.
\end{array}$$

\end{proof}

\begin{co}
If $L$ is a real form of $\e6$ of signature $-26$, then $L$ cannot inherit any of the following gradings:
$\Gamma_1$, $\Gamma_2$,  $\Gamma_3$, $\Gamma_5$, $\Gamma_6$, $\Gamma_9$, $\Gamma_{10}$ and $\Gamma_{14}$.
\end{co}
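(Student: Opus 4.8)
The plan is to apply Proposition~\ref{lemaNo} to each of the eight gradings, with $\sign L=-26$ held fixed. Since $\dim_\C S_e\ge 0$, the left-hand side of the inequality there equals $26+\dim_\C S_e$, so it suffices to show, for each $\Gamma_i$ in the list, that the right-hand side $\sum_{e\ne g,\,2g=e}\dim_\C S_g$ is \emph{strictly smaller} than $26+\dim_\C S_e$; the contrapositive of the proposition then guarantees that $L$ cannot inherit $\Gamma_i$.

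First I would read off the two relevant pieces of data for each grading from Table~\ref{tablafinasdee6}. For $\dim_\C S_e$: as established inside the proof of Proposition~\ref{lemaNo}, for a fine grading the neutral component is the toral fixed subalgebra of the maximal torus of the inducing MAD-group, so $\dim_\C S_e$ equals the free rank $l$ of the universal group. Thus $l=0$ for $\Gamma_1,\Gamma_3,\Gamma_9,\Gamma_{14}$, $l=2$ for $\Gamma_2,\Gamma_{10}$, $l=4$ for $\Gamma_6$, and $l=6$ for $\Gamma_5$. For the index set on the right, the elements of order two of the universal group all lie in the $2$-torsion of its torsion part: $\Z_3$-factors contribute none, a factor $\Z_2^k$ contributes its $2^k-1$ nonzero elements, and $\Z_4^3$ contributes the $2^3-1=7$ nonzero elements of its $2$-torsion $(\{0,2\})^3$. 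Hence the number of order-two elements is $0$ for $\Gamma_1,\Gamma_2,\Gamma_5$, is $1$ for $\Gamma_6,\Gamma_9$, and is $7$ for $\Gamma_3,\Gamma_{10},\Gamma_{14}$.

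Next I would bound $\sum_{2g=e,\,g\ne e}\dim_\C S_g$ crudely, using only that each summand is at most the largest component dimension recorded in the type. For $\Gamma_1,\Gamma_2,\Gamma_5$ the sum is empty, hence $0<26\le 26+l$. For $\Gamma_6$ and $\Gamma_9$ there is a single order-two position and the maximal non-neutral component has dimension $2$, so the sum is at most $2$, well below $26+l$. For $\Gamma_3$ and $\Gamma_{14}$ every component has dimension at most $2$, so the sum over the $7$ order-two positions is at most $14<26=26+l$. The only case needing care is $\Gamma_{10}$, whose type $(60,7,0,1)$ contains a $4$-dimensional component: since $\dim_\C S_e=2$, this $4$-dimensional piece is \emph{not} the neutral component and could a priori sit at an order-two position, so the worst configuration for the $7$ order-two positions is one $4$-dimensional and six $2$-dimensional components, giving at most $4+12=16<28=26+2$. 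In every case the inequality of Proposition~\ref{lemaNo} fails, so $L$ inherits none of $\Gamma_1,\Gamma_2,\Gamma_3,\Gamma_5,\Gamma_6,\Gamma_9,\Gamma_{10},\Gamma_{14}$.

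The argument is essentially bookkeeping once Proposition~\ref{lemaNo} is available. The one point I would double-check is the identification $\dim_\C S_e=l$, i.e. that for these fine gradings the neutral component is exactly the Lie algebra of the maximal torus and nothing larger, which is precisely the toral-and-positive-definite analysis carried out in the proof of Proposition~\ref{lemaNo}. The only place where the crude bound (number of $2$-torsion elements times maximal component dimension) might have threatened to reach $26+l$ is $\Gamma_{10}$, because of its $4$-dimensional component; the computation shows even there the margin is safe ($16$ versus $28$), so no sharper estimate is required.
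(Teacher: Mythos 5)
Your proposal is correct and follows essentially the same route as the paper: both reduce the claim to Proposition~\ref{lemaNo}, identify $\dim_\C S_e$ with the free rank of the universal group via the quasitorus, and count the order-two elements of each universal group. The only difference is that the paper computes $\sum_{2g=e,\,g\ne e}\dim S_g$ exactly from the explicit descriptions of the gradings, whereas you replace this with upper bounds read off from the types in Table~\ref{tablafinasdee6}; since only an upper bound is needed (and your careful treatment of the $4$-dimensional component of $\Gamma_{10}$ handles the one delicate case), this is a legitimate and slightly lighter version of the same argument.
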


\begin{proof}
Let us check first the values in the next table: 
\begin{center}
 { 
 \begin{tabular}{c||
 cccccccc|}

 &$\ \Gamma_1\ $&$\ \Gamma_2\ $&$\ \Gamma_{3}\ $&$\ \Gamma_{5}\ $&
 $\ \Gamma_{6}\ $&$\ \Gamma_{9}\ $&$\ \Gamma_{10}\ $&$\ \Gamma_{14}\ $\\
 \hline\hline \vrule width 0pt height 14pt
 $\dim S_e$&$ 0$&$2$&$0$&$6$&$4$&$0$&$2$&$0$
 \vrule width 0pt height 5pt\\
 \vrule width 0pt height 5pt
 $\sum_{e\ne g\in G,2g=e}\dim S_g$&$0$&$0$&$14$&$0$&$2$&$0$&$16$&$14$
 \vrule width 0pt height 2pt  
 \vrule width 0pt height 2pt \\
 \hline   
  \end{tabular}  
 }
 \end{center}  \medskip
 Then, by applying Proposition~\ref{lemaNo}, any real form inheriting some of these gradings would have signature at least $-14$. (Moreover, 
 $\Gamma_1$ and $\Gamma_9$ are not inherited by any real form of $\e6$; if there is a real form that inherits $\Gamma_2$, necessarily it would have signature $2$; and $\sig{14}$   does not inherit $\Gamma_6$ either.)\smallskip

 Note that the first row is a direct consequence of \cite[Corollary 5]{criscanf4}, which asserts that the dimension of the homogeneous component corresponding to the neutral element coincides, for a fine grading on a simple Lie algebra over $\C$, with the dimension of the quasitorus producing the grading. 
 
 For the second row, we need a case by case  study of $d:=\sum_{e\ne g\in G,2g=e}\dim S_g$. Here we use the descriptions of the gradings  as well as the data about the size  of the homogeneous components provided in \cite{e6}. The universal   group $G_i$ of each grading $\Gamma_i$ was recalled in Table~\ref{tablafinasdee6}.
 \begin{itemize}
 \item[$\Gamma_1$)] The group $G_1=\Z_3^4$ does not have order two elements. The same happens to $G_2=\Z^2\times\Z_3^2$ and to $G_5=\Z^6$. Hence $d=0$ in the three cases.
 \item[$\Gamma_3$)] For any $e\ne g\in\Z_2^3$, the homogeneous component $L_{(\bar0,\bar0,g)}$ has dimension 2, so that $d=2\cdot7$.
 \item[$\Gamma_6$)] Note that $d=\dim L_{(0,0,0,0,\bar1)}=2$, since this homogeneous component coincides with the set of zero trace diagonal elements in the Albert algebra.  
 \item[$\Gamma_9$)] Now $d=\dim L_{(\bar0,\bar0,\bar0, \bar1)}=0$, since the $\Z_3^3$-grading on the Albert algebra has trivial neutral component. 
 \item[$\Gamma_{10}$)] This grading, of type $(60,7,0,1)$, has 7 two-dimensional components corresponding all of them to elements $g\in G_{10}=\Z^2\times\Z_2^3$ satisfying $2g=e$ (one of them is the own $e$). The other component corresponding to an order two element is precisely the only one of dimension 4. Thus $d=6\cdot2+4=16$.
 \item[$\Gamma_{14}$)] All the seven involved homogeneous components have dimension 2, so that  this time $d=14$.
 \end{itemize}
\end{proof}

We cannot apply this technique to $\Gamma_{12}$ and $\Gamma_{13}$   since in both cases $-26\in[\,\dim S_e-c,\dim S_e+c\,]$, but our purpose is still to prove that these gradings are not inherited by $\sig{26}$. In the first case, a classical result due to Cheng provides a tool.

\begin{pr}  ( \cite[Theorem~3]{Cheng})
A simple Lie algebra $L$ admits a $\mathbb{Z}$-grading of the second kind (that is, $L=L_{-2}\oplus L_{-1}\oplus L_{0}\oplus L_{1}\oplus L_{2}  $
with $\dim L_2=1$) if and only if there is a long root $\alpha$ of the restricted root system such that the multiplicity
$m_{\bar \alpha}=1$. 
\end{pr}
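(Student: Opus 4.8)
The plan is to translate the grading into its defining element and then read everything off the restricted root decomposition. Since $L$ is simple every derivation is inner, so a $\mathbb{Z}$-grading $L=\bigoplus_n L_n$ is given by a single $\mathbb{R}$-diagonalizable element $h\in L$ with $\ad(h)|_{L_n}=n\,\id$. As $h$ is $\mathbb{R}$-semisimple, I would conjugate it (by the conjugacy of maximal $\mathbb{R}$-split tori under $\Int(L)$, which contain every $\mathbb{R}$-diagonalizable element) into a fixed maximal split abelian subalgebra $\mathfrak a$, and use the restricted root decomposition $L=(\mathfrak m\oplus\mathfrak a)\oplus\bigoplus_{\bar\alpha\in\Sigma}L_{\bar\alpha}$, where $\Sigma$ is the restricted root system, irreducible because $L$ is simple, and $\dim L_{\bar\alpha}=m_{\bar\alpha}$. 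The grading then reads $L_n=\bigoplus_{\bar\alpha(h)=n}L_{\bar\alpha}$ for $n\ne 0$, so the hypothesis $\dim L_2=1$ becomes: there is exactly one $\bar\alpha\in\Sigma$ with $\bar\alpha(h)=2$, and it satisfies $m_{\bar\alpha}=1$.

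For the ``if'' direction, given a long root $\bar\alpha$ with $m_{\bar\alpha}=1$ I would take $h=H_{\bar\alpha}$, the coroot normalized so that $\bar\alpha(H_{\bar\alpha})=2$. Then $\bar\beta(h)=\langle\bar\beta,\bar\alpha^\vee\rangle=2(\bar\beta,\bar\alpha)/(\bar\alpha,\bar\alpha)\in\mathbb{Z}$ for every $\bar\beta\in\Sigma$, since $\Sigma$ is crystallographic; hence $\ad(h)$ has integer eigenvalues and defines a $\mathbb{Z}$-grading. Because $\bar\alpha$ is long, $(\bar\alpha,\bar\alpha)\ge(\bar\beta,\bar\beta)$, so Cauchy--Schwarz gives $|\bar\beta(h)|\le 2$, with equality forcing $\bar\beta=\pm\bar\alpha$. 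Thus the eigenvalue $2$ occurs only on $L_{\bar\alpha}$, and $\dim L_2=m_{\bar\alpha}=1$. (In the non-reduced type $BC$ the only further root proportional to $\bar\alpha$ is $\tfrac12\bar\alpha$, on which $h$ takes the value $1$, so the conclusion is unchanged.) This produces the desired grading of the second kind.

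For the ``only if'' direction I would choose the positive system so that $h$ is dominant, i.e.\ $\bar\beta(h)\ge 0$ on positive roots (refine the ordering using the values $\bar\beta(h)$). Let $\bar\delta$ be the highest root of $\Sigma$; since $\Sigma$ is irreducible, $\bar\delta$ is long, and for every root $\bar\beta$ the difference $\bar\delta-\bar\beta$ is a non-negative combination of simple roots, so $\bar\delta(h)\ge\bar\beta(h)$. All eigenvalues of $\ad(h)$ lie in $\{-2,\dots,2\}$ and the value $2$ is attained, whence $\bar\delta(h)=2$ is the maximum. But $\dim L_2=1$ forces $\bar\alpha$ to be the \emph{unique} root with $h$-value $2$, so $\bar\delta=\bar\alpha$; therefore $\bar\alpha$ is long and $m_{\bar\alpha}=1$, as required.

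I expect the main obstacle to be the structural first step rather than the combinatorics: one must justify rigorously that the defining element of an \emph{arbitrary} $\mathbb{Z}$-grading can be moved into $\mathfrak a$, and assemble the standard facts that the restricted root system of a simple real algebra is irreducible and crystallographic, together with the correct handling of the non-reduced type $BC$ when bounding the Cartan integers. Once these are in place the length hypothesis enters only through the elementary Cauchy--Schwarz bound on $2(\bar\beta,\bar\alpha)/(\bar\alpha,\bar\alpha)$ and through the fact that the highest root is long.
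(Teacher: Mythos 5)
The paper does not actually prove this proposition: it is quoted, with a citation, from Cheng's article \cite[Theorem~3]{Cheng}, and is used only through its ``only if'' direction (all restricted roots of $\sig{26}$ have multiplicity $8$, hence no second-kind $\Z$-grading, hence $\Gamma_{12}$ is not inherited). So there is no in-paper argument to compare yours with; judged on its own, your proof is correct. The reduction of a $\Z$-grading to its hyperbolic grading element $h$ (with $\ad h$ acting as $n\,\id$ on $L_n$), the identification $L_n=\oplus_{\bar\beta(h)=n}L_{\bar\beta}$ for $n\ne 0$ after moving $h$ into $\mathfrak{a}$, the Cauchy--Schwarz bound $|\langle\bar\beta,\bar\alpha^\vee\rangle|\le 2$ with equality only at $\bar\beta=\pm\bar\alpha$ when $\bar\alpha$ is long (correctly including the non-reduced $BC$ case, where $\tfrac12\bar\alpha$ lands in $L_{\pm1}$), and the dominance argument identifying the unique root of $h$-value $2$ with the long highest root are all sound. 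The ingredients you should state explicitly, with references, are exactly the ones you flag yourself: every $\ad$-$\R$-diagonalizable element of a real semisimple Lie algebra lies in a maximal split toral subalgebra and all such subalgebras are $\Int(L)$-conjugate to $\mathfrak{a}$; the restricted root system of a noncompact simple real Lie algebra is an irreducible, crystallographic (possibly non-reduced) root system; and the highest root of such a system is long. With those cited (e.g.\ to \cite{Helgason}) the argument is complete; note also that the compact case is vacuous on both sides of the equivalence, so assuming $\mathfrak{a}\ne 0$ loses nothing.
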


Before applying this proposition, recall briefly some basic concepts about restricted roots. If $L=L_{\bar0}\oplus L_{\bar1}$ is a Cartan decomposition and $\mathfrak{a}$ is any maximal abelian subspace of $L_{\bar1}$, then $0\ne\lambda\in\mathfrak{a}^*$
is called a \emph{restricted root} if $L_\lambda=\{x\in L\mid [h,x]=\lambda(h)x\ \forall h\in \mathfrak{a}\}$ is nonzero. The \emph{multiplicity} $m_\lambda$ is defined  as  the dimension of $L_\lambda$. If we take any maximal abelian subalgebra $\h$  of $L$ containing $\mathfrak{a}$, then $\h^\C$ is a Cartan subalgebra of $S$, and, if $\Delta$ denotes the root system of $S$ relative to $\h^\C$, the restricted roots are exactly the nonzero restrictions of roots to $\mathfrak{a}\subset\h^\C$. The Satake
  diagram of $L$ \cite{Satake} is the Dynkin diagram of $S$ where the nodes corresponding to simple roots with restriction zero to $\mathfrak{a}$ are painted in black. The Satake diagram 
  of $\sig{26}$ is 
\begin{center}{
\begin{picture}(25,5)(4,-0.5)
\put(5,0){\circle{1}} \put(9,0){\circle*{1}} \put(13,0){\circle*{1}}
\put(17,0){\circle*{1}} \put(21,0){\circle{1}}
\put(13,4){\circle*{1}}
\put(5.5,0){\line(1,0){3}}
\put(9.5,0){\line(1,0){3}} \put(13.5,0){\line(1,0){3}}  
\put(17.5,0){\line(1,0){3}}
\put(13,0.5){\line(0,1){3}}
\put(4.7,-2){$\scriptstyle \alpha_1$} \put(8.7,-2){$\scriptstyle \alpha_3$}
\put(12.7,-2){$\scriptstyle \alpha_4$} \put(16.7,-2){$\scriptstyle \alpha_5$}
\put(20.7,-2){$\scriptstyle \alpha_6$} \put(13.9,3.6){$\scriptstyle \alpha_2$}
\end{picture}
}\end{center}\vskip0.4cm
and the set of restricted roots is just $\Sigma=\pm\{\bar\a_1,\bar\a_6,\overline{\a_1+\a_6}\}$ ($\bar\a$ means $\a\vert_{\mathfrak{a}}$), each of them with multiplicity equal to 8 (\cite[Table~VI]{Helgason} or \cite[Section~5]{Satakes}). Consequently, in spite of being $\Z$-graded, the real form $\sig{26}$ does not admit any $\mathbb{Z}$-grading of the second kind. Now note that the (unique) $\Z$-grading  on $S=\e6$ obtained as a coarsening of $\Gamma_{12}$ is just a $\mathbb{Z}$-grading of the second kind (many details about this $\Z$-grading are developed in \cite[\S3.4]{Weyle6}), with $\dim S_{\pm2}=1$, $\dim S_{\pm1}=20$ and $\dim S_0=36$ (isomorphic to $\mathfrak{gl}_\C(6)$). Consequently, it is impossible that $\sig{26}$ inherits $\Gamma_{12}$.\medskip

Finally we apply the techniques in Section~\ref{sec_gradreales} to study the last grading, $\Gamma_{13}$.

\begin{pr}\label{casitodasheredan13}
The  real forms   of $\e6$ inheriting the $\Z_2^7$-grading $\Gamma_{13}$ are just $\mathfrak{e}_{6,6}$, $\mathfrak{e}_{6,2}$, $\mathfrak{e}_{6,-14}$ and $\mathfrak{e}_{6,-78}$.
\end{pr}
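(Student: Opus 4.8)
The plan is to run the machinery of Proposition~\ref{pr_nuestrometodo} with the \emph{compact} form as base case. First I would check that $\sig{78}$ inherits $\Gamma_{13}$: the quasitorus $Q\cong\Z_2^7$ producing $\Gamma_{13}$ is a finite $2$-group, so it lies in some maximal compact subgroup of $\aut(\e6)$, and the associated compact conjugation $\tau$ commutes with every $q\in Q$. Since the characters of $\Z_2^7$ take only the real values $\pm1$, this $\tau$ preserves each homogeneous component (for $x\in S_g$ one has $q(\tau x)=\tau(q(x))=\overline{g(q)}\,\tau(x)=g(q)\tau(x)$), so the compact real form $\e6^\tau\cong\sig{78}$ is a graded subspace, i.e.\ it inherits $\Gamma_{13}$; this already accounts for the signature $-78$. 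Now Proposition~\ref{pr_nuestrometodo} applies with $\si=\tau$: every $q\in Q$ satisfies $q^2=\id$ and commutes with $\tau$, so $(q\tau)^2=q(\tau q\tau^{-1})\tau^2=q^2=\id$ shows $q\tau$ is again a conjugation, and the real forms inheriting $\Gamma_{13}$ are \emph{exactly} the algebras $\e6^{q\tau}$ with $q\in Q\cong\Z_2^7$.

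The second step is to turn the signature into a sum of local, sign-flipping contributions. Every element of $\supp\Gamma_{13}$ has order at most $2$ in $\Z_2^7$, so by Remark~\ref{re_ortogonales} the homogeneous components are pairwise orthogonal for the Killing form $k$ and each $k|_{S_g}$ is nondegenerate. Since $\tau$ is compact, $k$ is negative definite on $\e6^\tau$, so each of the $72$ one-dimensional components contributes $-1$ to the signature and the unique $6$-dimensional component, say of degree $g_0$, contributes $-6$. Replacing $\tau$ by $q\tau$ multiplies the contribution of the component of degree $g$ by $q(g)=\pm1$: the $\tau$-fixed vectors of $S_g$ are replaced by $\ii$ times themselves exactly when $q(g)=-1$, which negates $k$ there. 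Writing $\mathcal{L}\subset\Z_2^7$ for the set of degrees of the $72$ lines, this gives the master formula
\[
\sign\big(\e6^{q\tau}\big)=-\sum_{g\in\mathcal{L}}q(g)\;-\;6\,q(g_0),\qquad q\in Q .
\]

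It then remains to compute the image of $q\mapsto\sign(\e6^{q\tau})$ as $q$ runs over the $2^7$ characters of $\Z_2^7$. Here I would import the explicit description of $\Gamma_{13}$ from \cite{e6} --- the $73$-element support, the location $g_0$ of the $6$-dimensional block, and the $\F$-linear relations among $\mathcal{L}\cup\{g_0\}$ in $\Z_2^7$ --- and evaluate the master formula, organizing the characters into orbits under the symmetry group of the grading (computed in \cite{Weyle6}) so that only finitely many representative values must be checked. The expected outcome is that the attained signatures are precisely $6$, $2$, $-14$ and $-78$, which together with Table~\ref{tabladeautomorfysignaturas} identifies the four real forms in the statement; equivalently one may track $\dim\fix(\theta_{q\tau})$ through $\Phi$ and Equation~(\ref{eq_signapartirparte fija}) and verify that it never equals $52$.

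The main obstacle is exactly this last evaluation, and more precisely the exclusion of $-26$. A value $\sign=-26$ forces either $\sum_{g\in\mathcal{L}}q(g)=20$ with $q(g_0)=+1$, or $\sum_{g\in\mathcal{L}}q(g)=32$ with $q(g_0)=-1$. But $\sum_{g\in\mathcal{L}}q(g)=20$ is the \emph{same} line-count that yields $-14$ when $q(g_0)=-1$, so $-26$ and $-14$ are separated only by the sign $q(g_0)$ contributed by the $6$-dimensional block. Ruling out $-26$ therefore reduces to two purely combinatorial facts about how $g_0$ sits relative to $\mathcal{L}$ in $\Z_2^7$: that no character has $q(g)=-1$ on exactly $20$ of the $72$ lines (so $\sum_{g\in\mathcal{L}}q(g)=32$ never occurs), and that any character with $q(g)=-1$ on exactly $26$ lines (so $\sum_{g\in\mathcal{L}}q(g)=20$) necessarily satisfies $q(g_0)=-1$. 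Establishing these two facts from the explicit model is the delicate, computational heart of the argument; everything preceding it is formal.
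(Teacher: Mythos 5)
Your reduction is correct and is, in a different normalization, the paper's own reduction. Once one conjugation commuting with the whole quasitorus $Q$ is found (the paper produces the split form from the homogeneous basis $\{h_j,\,e_\alpha+f_\alpha,\,e_\alpha-f_\alpha\}$ via Proposition~\ref{pr_metodobasereal} and then twists by $\omega$; your maximal-compact-subgroup argument for the compact form is a legitimate alternative), Proposition~\ref{pr_nuestrometodo} parametrizes the inheriting real forms as $S^{q\tau}$, $q\in Q$. Since $\tau$ is compact and commutes with $q$, one may take $\theta_{q\tau}=q$, so your ``master formula'' is exactly $\sign(S^{q\tau})=78-2\dim\fix(q)=-\sum_g q(g)\dim S_g$; I checked it against the paper's values and it is correct, as are the orthogonality and definiteness claims behind it.

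The gap is that the only step carrying real content --- the exclusion of signature $-26$ --- is precisely the step you defer to an unperformed evaluation over the $2^7$ elements of $Q$, declaring the outcome ``expected.'' Your two combinatorial facts are true, but you give no argument for them, and as you have phrased them they appear to need the explicit list of the $72$ supports. The paper closes this without any enumeration by using the structure $Q=(T\cap Q)\sqcup\omega(T\cap Q)$: the $6$-dimensional component is the Cartan subalgebra (so $q(g_0)=+1$ exactly when $q=t$ is inner), and the $72$ lines are $\C(e_\alpha+f_\alpha)$ and $\C(e_\alpha-f_\alpha)$ for $\alpha\in\Delta^+$. If $q=t$ is inner, then $\dim\fix(t)\in\{78,38,46\}$ by the classification of inner involutions of $\e6$ (Table~\ref{tabladeautomorfysignaturas}: $F_4$, of dimension $52$, only arises from an \emph{outer} involution), which in your language says $\sum_{g\in\mathcal{L}}q(g)\in\{72,-8,8\}$ and never $20$ or $32$. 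If $q=\omega t$ is outer, then on each plane $S_\alpha\oplus S_{-\alpha}$ the automorphism $\omega t$ has eigenvalues $+\chi_\alpha(t)$ and $-\chi_\alpha(t)$, so it fixes exactly one of the two lines for each positive root; hence $\sum_{g\in\mathcal{L}}q(g)=0$, $\dim\fix(\omega t)=36$, and the signature is always $6$. So the ``delicate computational heart'' you postpone is in fact a short structural argument once the support is organized by positive roots, but without it (or the explicit check you do not carry out) the proposal does not yet prove the statement.
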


\begin{proof}
Let $\mathcal{B}= \{h_j,e_\alpha,f_\alpha\mid j=1,\dots,6, \alpha\in\Delta^+\} $ be the basis chosen as in Section~\ref{sec_prelimirealforms},
where $\Delta$ is a root system relative to some Cartan subalgebra of $S=\e6$ and $\{\a_1,\dots,\a_6\}$ is a fixed basis of $\Delta$. In particular, $L=\sum_{b\in\mathcal B}\R b$ is a split real form of $S$  with related conjugation $\sigma_0$.
Let $T=\{t_{s_1, \dots, s_6}\mid s_i\in\C^{\ast}\}$ be the torus of $\aut (S)$ producing the Cartan grading, that is, the element $t_{s_1, \dots, s_6}$ acts in the root space $S_\a$, for $\alpha=\sum_{i=1}^6a_i\a_i$, with eigenvalue $s_1^{a_1}\dots s_6^{a_6}$. If $t=t_{s_1, \dots, s_6}\in T$ has order two ($s_i=\pm1$), then 
$t$ acts with the same eigenvalue in $S_{\alpha}$ as in $S_{-\alpha}$.
Consider as in Section~\ref{sec_prelimirealforms} the involutive automorphism $\omega\in\aut(S)$ determined by 
$$
\omega(e_{\alpha_i})=-f_{\alpha_i},\quad
\omega(f_{\alpha_i})=-e_{\alpha_i},\quad
\omega(h_{\alpha_i})=-h_{\alpha_i}.
$$
The  MAD-group of $\aut(S)$ given by
\begin{equation*}
Q=   \{t,\omega t \mid t\in T, \; t^2=1\}
\end{equation*}
is just the MAD-group producing the $\Z_2^7$-grading $\Gamma_{13}$.
 Note that the $\C$-basis of $S=\e6$ given by $\mathcal B'=\{h_j, e_{\alpha}+f_{\alpha}, e_{\alpha}- f_{\alpha} \mid  j=1,\dots, 6,\alpha \in \Delta^+\}$  is homogeneous for $\Gamma_{13}$ (the elements in $\mathcal B'$ are simultaneous eigenvectors for all the elements in $Q$). Moreover,  $\mathcal B'$ is contained in $L=S^{\sigma_0}$. The fact of  $L $ having   a homogeneous $\R$-basis   implies (Proposition~\ref{pr_metodobasereal}) that  the real form $L $ inherits the grading produced by $Q$. Then, by Proposition~\ref{pr_nuestrometodo},
  \begin{equation*} 
  \{\sigma_0 q \mid q\in Q,\, (\sigma_0 q)^2=\id\}= \{\sigma_0 q \mid q\in Q\}
 \end{equation*}
 is exactly the set of conjugations whose related real forms inherit $\Gamma_{13}$ ($\sigma_0$ commutes with any $q$, so that $\sigma_0q$ has order 2 in all the cases).
Let us  analyze the possible signatures related to this set by computing the isomorphism class of the automorphism $\Phi([\sigma_0 q])$ for each $q\in Q$. 
 As $\omega$ commutes with $\sigma_0$ and with all the elements $q\in Q$, and $ \sigma_0 \omega$   is a compact conjugation, then
$
\Phi([\sigma_0 q])=[\omega q]
$
for any $q\in Q$. Recalling the elements in $Q$,
\begin{itemize}
\item $\Phi([\sigma_0 \omega])=[\omega^2]=[\id]$, so that    the compact real form inherits $\Gamma_{13}$.
\item $\Phi([\sigma_0 \omega t])=[t]$. As every inner automorphism is conjugated to one in the maximal torus, here the two types of order two inner automorphisms appear. Therefore, both the real forms $\mathfrak e_{6,2}$ and $\sig{14}$ inherit $\Gamma_{13}$. 
\item $\Phi([\sigma_0 t])= [\omega t]$. But the     outer automorphism $\omega t$   fixes a subalgebra isomorphic to $\c4$ independently of the chosen order two element $t\in T$, that is, $\dim\fix (\omega t)\ne 52$ for any $t$. Indeed, for $t=t_{s_1, \dots, s_6} \in T\cap Q$ ($s_i\in\{\pm1\}$), denote $\Delta^{t,+}=\{\a=\sum a_i\a_i\in\Delta\mid s_1^{a_1}\dots s_6^{a_6}=1\}$ and 
$\Delta^{t,-}=\{\a=\sum a_i\a_i\in\Delta\mid s_1^{a_1}\dots s_6^{a_6}=-1\}=\Delta\setminus\Delta^{t,+}$.  The subalgebra fixed by $\omega t$ is precisely
$$
\langle  \{e_{\alpha}+f_{\alpha}\mid \alpha\in \Delta^{t,+}\cap\Delta^+\}\rangle\oplus 
\langle  \{e_{\alpha}-f_{\alpha}\mid \alpha\in\Delta^{t,-}\cap\Delta^+\}\rangle,
$$
whose dimension is the cardinal of $(\Delta^{t,+}\cap\Delta^+)\cup(\Delta^{t,-}\cap\Delta^+)=\Delta^+$, which is (always) $36$.
 This implies that the split real form $\mathfrak e_{6,6}$ inherits $\Gamma_{13}$ and that this is not the case for $\sig{26}$.
\end{itemize}
\end{proof}

\subsection{Conclusions about the gradings on $\sig{26}$.}

From all the above, we have proved our main result:

\begin{te}
There are exactly 4 fine gradings on $\e6$ producing fine gradings on $\sig{26}$, namely, the $\Z_2^6$-grading $\Gamma_7$,  the $\Z\times\Z_2^4$-grading  $\Gamma_8$, the $\Z^2\times\Z_2^3$-grading  (of inner type) $\Gamma_4$ and the $\Z_4\times\Z_2^4$-grading  $\Gamma_{11}$.
\end{te}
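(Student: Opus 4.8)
The plan is to exploit Proposition~\ref{pr_inheritedFineGrading}: since the complexification of an inherited grading is the fine grading $\Gamma_i$, any grading on $\sig{26}$ with $\Gamma^\C=\Gamma_i$ is automatically fine. Hence the task reduces to deciding, for each of the fourteen fine gradings $\Gamma_1,\dots,\Gamma_{14}$ of Table~\ref{tablafinasdee6}, whether some real form of signature $-26$ inherits it. The four affirmative cases have already been settled in the preceding subsections: the $\Z_2^6$-grading $\Gamma_7$ and the $\Z\times\Z_2^4$-grading $\Gamma_8$ arise by transporting the $\Z_2^5$- and $\Z\times\Z_2^3$-gradings of $\J=\mathcal{H}_3(\OO,\diag\{1,-1,1\})$ to $\L=\der\J\oplus\J_0\cong\sig{26}$; the $\Z^2\times\Z_2^3$-grading $\Gamma_4$ comes from combining, via Equation~(\ref{eq_mixinggradings}), the $\Z^2$-grading of $\M=\Mat_{3\times3}(\R)^+$ with the $\Z_2^3$-grading of $\OO$ on the model $\T(\OO,\M)\cong\sig{26}$ of Proposition~\ref{pr_constTitsnuestra}; and the $\Z_4\times\Z_2^4$-grading $\Gamma_{11}$ is inherited by item~c) of Proposition~\ref{hereda11}. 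This establishes the existence half of the statement.

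For the negative half, I would dispatch the remaining ten gradings by three separate arguments. The bulk of them fall to the signature estimate of Proposition~\ref{lemaNo}: for a real form $L$ inheriting $\Gamma$ one has $|\sign L-\dim S_e|\le\sum_{e\ne g,\,2g=e}\dim S_g$, so computing the right-hand quantity $d$ together with $\dim S_e$ for each grading shows that $-26$ lies outside the admissible interval for $\Gamma_1,\Gamma_2,\Gamma_3,\Gamma_5,\Gamma_6,\Gamma_9,\Gamma_{10}$ and $\Gamma_{14}$. Here $\dim S_e$ is read off from \cite[Corollary~5]{criscanf4} (it equals the dimension of the quasitorus producing the grading), and $d$ is obtained from the explicit descriptions of the homogeneous components in \cite{e6}; in each case the forced signature is at least $-14$, so signature $-26$ is impossible.

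The two delicate cases are $\Gamma_{12}$ and $\Gamma_{13}$, where $-26$ does lie inside the interval of Proposition~\ref{lemaNo} and a finer analysis is needed. For $\Gamma_{12}$ I would pass to its unique $\Z$-coarsening, which is a $\Z$-grading of the second kind on $\e6$ with $\dim S_{\pm2}=1$; by Cheng's theorem \cite[Theorem~3]{Cheng} a simple real Lie algebra admits such a grading only if its restricted root system has a long root of multiplicity one, whereas the Satake diagram of $\sig{26}$ yields $\Sigma=\pm\{\bar\a_1,\bar\a_6,\overline{\a_1+\a_6}\}$ with all multiplicities equal to $8$, so $\sig{26}$ cannot inherit $\Gamma_{12}$. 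For $\Gamma_{13}$ I would invoke Proposition~\ref{casitodasheredan13}, whose proof applies the method of Proposition~\ref{pr_nuestrometodo} to identify $\Phi([\sigma_0q])$ for each $q$ in the MAD-group producing $\Gamma_{13}$ and finds that the inheriting real forms are exactly $\mathfrak{e}_{6,6}$, $\mathfrak{e}_{6,2}$, $\mathfrak{e}_{6,-14}$ and $\mathfrak{e}_{6,-78}$; signature $-26$ is absent. The main obstacle throughout is precisely these two borderline cases: the signature inequality is blunt there, so one is forced into the root-theoretic and representation-theoretic computations (Cheng's criterion, and the explicit determination that the fixed subalgebra $\fix(\omega t)$ always has dimension $36$) rather than a uniform dimension count. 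Combining the four positive constructions with the ten exclusions, and recalling that Table~\ref{tablafinasdee6} exhausts the fine gradings on $\e6$, yields exactly the four asserted gradings.
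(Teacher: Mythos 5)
Your proposal is correct and follows essentially the same route as the paper: the four positive cases are established by the explicit constructions of the preceding subsections (with Proposition~4 guaranteeing fineness of the inherited gradings), and the ten exclusions are obtained exactly as in the paper, via the signature bound of Proposition~8 for $\Gamma_1,\Gamma_2,\Gamma_3,\Gamma_5,\Gamma_6,\Gamma_9,\Gamma_{10},\Gamma_{14}$, Cheng's criterion on $\Z$-gradings of the second kind together with the restricted root multiplicities of $\sig{26}$ for $\Gamma_{12}$, and the MAD-group/conjugation analysis for $\Gamma_{13}$. No gaps to report.
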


There are some open questions on the study of gradings on $\sig{26}$ which have still to be studied:
\begin{itemize}
\item[a)]  There could exist a fine grading  on $\sig{26}$ whose complexification were not a fine grading on $\e6$.

\item[b)] There could be two fine gradings on $\sig{26}$ not isomorphic but with isomorphic complexification.
\end{itemize}

Up to now, it is not known if   the situations described above could happen: none of the real forms of $\frak{g}_2$ and $\frak{f}_4$ have fine gradings whose complexification is not fine, and neither have they nonisomorphic fine gradings with isomorphic complexification \cite{reales}. But there are examples of nonisomorphic  (not fine)  gradings with isomorphic complexification \cite[Remark 1]{reales}. 
The only work containing a complete study (in the above sense) of fine gradings on a concrete real Lie algebra    is \cite{reales}.
In order to develop a similar theory for the real forms of the Lie algebra $\e6$, we have completed the description of the Weyl groups of the fine gradings on the (complex) Lie algebra $\e6$ \cite{Weyle6}, because the study of item b) is equivalent to the classification of the number of orbits \cite[Proposition~5]{reales} of the action of the normalizer of the MAD-group producing the (complexified) grading on the set of conjugations whose related real forms inherit the grading.


\end{document}